\newtheorem{Proposition}{Proposition}
\newtheorem{Lemma}{Lemma}
\newtheorem{Theorem}{Theorem}
\newtheorem{Remark}{Remark}
\newtheorem{Example}{Example}
\newtheorem{Corollary}{Corollary}
\begin{document}
	\title{Some aspects of generalized Dunkl-Williams constant in Banach spaces}
	\author[1]{Haoyu Zhou}
	\author[1,2]{Qi Liu\thanks{Qi Liu:liuq67@aqnu.edu.cn }}
	\author[1]{Yuxin Wang}
	
	\affil[1]{School of Mathematics and physics, Anqing Normal University, Anqing 246133,P.R.China}
	\affil[2]{International Joint Research Center of Simulation and Control for Population Ecology of Yantze River in Anhui ,Anqing Normal University, Anqing 246133,P.R.China}
	\maketitle 
	\begin{abstract}
	This article delves into an exploration of two innovative constants, namely $DW(X,\alpha,\beta)$ and $DW_B(X,\alpha,\beta)$, both of which constitute extensions of the Dunkl-Williams constant. We derive both the upper and lower bounds for these two constants and establish two equivalent relations between them. Moreover, we elucidate the relationships between these constants and several well-known constants. Additionally, we have refined the value of the $DW_B(X,\alpha,\beta)$ constant in certain specific Banach spaces.
	\end{abstract}
	\textbf{keywords:} {Dunkl–Williams constant; James constant;  Birkhoff orthogonality }\\\textbf{Mathematics Subject Classification: }46B20; 46C15
	
	\section{Introduction and preliminaries} 
	Throughout the paper, we always suppose that $X$ is a real Banach space with $di m X \geq 2$
	unless specifically stated otherwise, $B_X$ is the unit ball of $X$ and $S_X$ is the unit sphere
	of $X$.

	In 1964, C.F. Dunkl and K.S. Williams [1] showed that, in any Banach space $X$ with norm $\|\cdot\|,$ the inequality
	$$\left\|\frac{1}{\|x\|}x-\frac{1}{\|y\|}y\right\|\leqslant\frac{4}{\|x\|+\|y\|}\|x-y\|$$                        
	holds for all $x,y\in X$ with $x\neq0$ and $y\neq0$. Actually, the Dunkl-Williams inequality gives the upper bound for the angular distance 
	$$\alpha[x,y]:=\left\|\frac{x}{\|x\|}-\frac{y}{\|y\|}\right\|$$
	between two nonzero elements $x$ and $y$. The concept of angular distance was first introduced by Clarkson [2]. Further, in [1], Dunkl and Williams also found that if $X$ is a Hilbert space,
	then the Dunkl-Williams	inequality can be improved to the following inequality
	$$\left\|\frac{x}{\|x\|}-\frac{y}{\|y\|}\right\|\leq\frac{2\|x-y\|}{\|x\|+\|y\|},$$
	which holds for all nonzero elements $x$ and $y$. Soon after, in the same year that the Dunkl-Williams inequality came out, Kirk and Smiley [3] proved that the inequality in fact characterizes the Hilbert space. 
	
	According to the above results, Jimenez-Melado et al.[4] pointed out that the smallest number which 
	can replace $ 4 $ in Dunkl-Williams inequality actually measures the closeness between this Banach space and
	Hilbert space. Thus, Jimenez-Melado et al.[4] considered the Dunkl-Williams constant as following:
	$$DW(X)=\sup\left\{\frac{\|x\|+\|y\|}{\|x-y\|}\left\|\frac{x}{\|x\|}-\frac{y}{\|y\|}\right\|:x,y\in X\backslash\{0\},x\neq y\right\}.$$
	Based on the results of $DW(X)$, some constants are defined using other elements related to the upper bound of angular distance. In this regard, Massera and  Sch\"{a}ffer have proven  $\text{the Massera-Sch\"{a}ffer inequality}$ [5]. That is 
	$$\left\|\frac{x}{\|x\|}-\frac{y}{\|y\|}\right\|\leq\frac{2\|x-y\|}{\max\{\|x\|,\|y\|\}}$$
	holds for all nonzero elements $x$ and $y$. Al-Rashed [8] introduced the following parameter 
	$$\Psi_{\infty}(X)=\sup\left\{\frac{\max\{\|x\|,\|y\|\}}{\|x-y\|}\left\|\frac{x}{\|x\|}-\frac{y}{\|y\|}\right\|:x,y\in X\backslash\{0\},x\neq y\right\}.$$
	However, Baronti and Papini [9] proved that $\Psi_{\infty}(X)=2$ holds for any Banach space $X$, in other words, the Massera-Sch\"{a}ffer inequality is always sharp in any Banach space $X$.
	
	Let $x, y$ be two elements in a real Banach space $X$. Then $x$ is said to be Birkhoff orthogonal to $y$ and
	denoted by $x\perp_By$ [16], if
	$$\|x+\lambda y\|\geq\|x\|,\lambda\in\mathbb{R}.$$
	In addition, $x$ is said to be isosceles orthogonal to $y$ and denoted by $x\perp_Iy$ [17], if
	$$\|x+y\|=\|x-y\|.$$
	Recall that $x$ is said to be Singer orthogonal to $y$ and denoted by $x\perp_Sy$ [18], if either $\|x\|\cdot\|y\|=0$ or$$\left\|\frac{x}{\|x\|}-\frac{y}{\|y\|}\right\|=\left\|\frac{x}{\|x\|}+\frac{y}{\|y\|}\right\|.$$

Recently, quantitative studies of the difference between three orthogonality types have been performed:
	
	$$DW_S(X)=\sup\left\{\frac{\|x\|+\|y\|}{\|x-y\|}\left\|\frac{x}{\|x\|}-\frac{y}{\|y\|}\right\|:x,y\in X\backslash\{0\},x\perp_Sy\right\} ,$$

	$$DW_I(X)=\sup\left\{\frac{\|x\|+\|y\|}{\|x-y\|}\left\|\frac{x}{\|x\|}-\frac{y}{\|y\|}\right\|:x,y\in X\backslash\{0\},x\perp_Iy\right\},$$

	$$MS_B(X)=\sup\left\{\frac{\max\{\|x\|,\|y\|\}}{\|x-y\|}\left\|\frac{x}{\|x\|}-\frac{y}{\|y\|}\right\|:x,y\in X\backslash\{0\},x\perp_By\right\},$$
 $$DW_B(X)=\sup\left\{\frac{\|x\|+\|y\|}{\|x-y\|}\left\|\frac{x}{\|x\|}-\frac{y}{\|y\|}\right\|:x,y\in X\backslash\{0\},x\perp_By\right\}$$

	(see [13,14,15]).

	Recall that the modulus of convexity of $X$ is the function $\delta_{X}:[0,2]\to[0,1]$ give by [11]
	$$\begin{aligned}\delta_X(\varepsilon)&=\inf\left\{1-\left\|\frac{1}{2}(x+y)\right\|{:}x,y\in B_X,\|x-y\|\geq\varepsilon\right\}\\&=\inf\left\{1-\left\|\frac{1}{2}(x+y)\right\|{:}x,y\in S_X,\|x-y\|\geq\varepsilon\right\},\end{aligned}$$
	 and the characteristic of convexity of $X$ is defined as the number 
$$\varepsilon_0(X):=\sup\{\varepsilon\in[0,2]: \delta_X(\varepsilon)=0\}.$$
	The James constant is defined as [12]
	$$J(X)=\sup\left\{\min\left(\lVert x+y\rVert,\lVert x-y\rVert\right){:}\lVert x\rVert\leq1,\lVert y\rVert\leq1\right\}.$$
	We say that $X$ is uniformly nonsquare if there exists $\delta>0$
	such that for any pair $x,y\in B_{X}$ we have either $\|x+y\|\leq\delta $, or $\|x-y\|\leq\delta $. It is easy to verify that the three conditions X is uniformly nonsquare, $\varepsilon_{0}(X)<2$ and $J(X)<2$ are equivalent.

	\section{The $DW(X,\alpha,\beta)$ constant }
We define a new constant $DW(X,\alpha,\beta)$ by generalizing the  Dunkl-Williams constant. Let $X$ be regarded as a Banach space. We first outline the following key definitions: $\alpha,\beta>0$

	$$DW(X,\alpha,\beta)=\sup\left\{\frac{\alpha\|x\|+\beta\|y\|}{\|x-y\|}\left\|\frac{x}{\|x\|}-\frac{y}{\|y\|}\right\|:x,y\in X\backslash\{0\},x\neq y\right\}.$$ 
\begin{Remark}
	If $\alpha=\beta=1$, then
	$$DW(X,1,1)=DW(X)=\sup\left\{\frac{\|x\|+\|y\|}{\|x-y\|}\left\|\frac{x}{\|x\|}-\frac{y}{\|y\|}\right\|:x,y\in X\backslash\{0\},x\neq y\right\}.$$
\end{Remark}

	\begin{Proposition}
		Let $X$ be a Banach space, then $\alpha+\beta\leq DW(X,\alpha,\beta)\leq2(\alpha+\beta)$.
		
	\end{Proposition}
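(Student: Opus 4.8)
The plan is to treat the two inequalities separately; both are short once one recalls the appropriate classical estimate for the angular distance.

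For the lower bound, I would simply exhibit a pair of vectors on which the defining quotient already equals $\alpha+\beta$. Since $\dim X\ge 2$, the unit sphere $S_X$ contains two distinct points $x\ne y$. For such a pair we have $\|x\|=\|y\|=1$, hence $\left\|\frac{x}{\|x\|}-\frac{y}{\|y\|}\right\|=\|x-y\|$, and the quotient $\frac{\alpha\|x\|+\beta\|y\|}{\|x-y\|}\left\|\frac{x}{\|x\|}-\frac{y}{\|y\|}\right\|$ collapses to $\alpha+\beta$. Taking the supremum over all admissible pairs then gives $DW(X,\alpha,\beta)\ge\alpha+\beta$.

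For the upper bound, the key point is to use the Massera--Sch\"affer inequality recalled above, $\left\|\frac{x}{\|x\|}-\frac{y}{\|y\|}\right\|\le\frac{2\|x-y\|}{\max\{\|x\|,\|y\|\}}$, valid for all nonzero $x,y$. Substituting this into the quotient cancels the factor $\|x-y\|$ and leaves $\frac{2(\alpha\|x\|+\beta\|y\|)}{\max\{\|x\|,\|y\|\}}$. Then the trivial bound $\alpha\|x\|+\beta\|y\|\le(\alpha+\beta)\max\{\|x\|,\|y\|\}$ shows that the quotient is at most $2(\alpha+\beta)$ for every admissible pair, and passing to the supremum yields $DW(X,\alpha,\beta)\le 2(\alpha+\beta)$.

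I do not expect a genuine obstacle here: the only point requiring a moment's thought is the choice of the sharp estimate for the angular distance. Using the original Dunkl--Williams bound $\frac{4}{\|x\|+\|y\|}\|x-y\|$ would only give $4\max\{\alpha,\beta\}$, which exceeds $2(\alpha+\beta)$ as soon as $\alpha\ne\beta$, whereas the Massera--Sch\"affer form with denominator $\max\{\|x\|,\|y\|\}$ matches the structure of the numerator $\alpha\|x\|+\beta\|y\|$ exactly. It is also worth observing that the lower bound is attained by any two distinct unit vectors, while equality in the upper bound is not expected in general.
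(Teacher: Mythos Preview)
Your proof is correct and follows essentially the same approach as the paper: the upper bound via the Massera--Sch\"affer inequality is identical in substance (the paper just splits your bound $\alpha\|x\|+\beta\|y\|\le(\alpha+\beta)\max\{\|x\|,\|y\|\}$ into two cases), and for the lower bound the paper happens to pick the specific pair $y=-x$ whereas you observe that any two distinct unit vectors already do the job.
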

	\begin{proof}
	Let $y=-x$, then clearly
	 $$\frac{\alpha\vert\vert x\vert\vert+\beta\vert\vert y\vert\vert}{\vert\vert x-y\vert\vert}\bigg|\bigg|\frac{x}{\vert\vert x\vert\vert}-\frac{y}{\vert\vert y\vert\vert}\bigg\vert\bigg\vert=\alpha+\beta,$$
	which means that 
$DW(X,\alpha,\beta)\geq\alpha+\beta.$
	
		On the other hand, due to $\text{the Massera-Sch\"{a}ffer inequality}$[5]
	$$\left\|\frac{x}{\|x\|}-\frac{y}{\|y\|}\right\|\leq\frac{2\|x-y\|}{\max\{\|x\|,\|y\|\}},$$
	we have 
	$$\frac{\alpha\vert\vert x\vert\vert+\beta\vert\vert y\vert\vert}{\vert\vert x-y\vert\vert}\bigg\vert\bigg\vert\frac{x}{\vert\vert x\vert\vert}-\frac{y}{\vert\vert y\vert\vert}\bigg\vert\bigg\vert\leq\frac{2(\alpha\vert\vert x\vert\vert+\beta\vert\vert y\vert\vert)}{\max\{\vert\vert x\vert\vert,\vert\vert y\vert\vert\}}.$$

 We need to discuss it in two cases.
	
	Case 1 :  If $||x||\geq||y||$, then
		$$	\frac{2(\alpha||x||+\beta||y||)}{\max\{||x||,||y||\}}\leq\frac{2(\alpha||x||+\beta||x||)}{||x||}=2(\alpha+\beta).$$
	Case 2: If $||y||\geq||x||$ ,then
		$$	\frac{2(\alpha||x||+\beta||y||)}{\max\{||x||,||y||\}}\leq\frac{2(\alpha||y||+\beta||y||)}{||y||}=2(\alpha+\beta).$$
	Thus,we obtain that $$DW{(X,\alpha,\beta)}\leq2(\alpha+\beta).$$
	\end{proof}

		\begin{Proposition}
		Let $X$ be a Banach space. Then following gives the equivalent definition of the $DW(X,\alpha,\beta)$ constant.
		$$(1)DW(X,\alpha,\beta)=\sup\left\{\frac{\|x+y\|}{\bigg|\bigg|\frac{1}{\alpha}(1-\beta t)x+ty\bigg|\bigg|}:x,y\in S_X,0< t<\frac{1}{\beta}\right\}.$$
	$$(2)DW(X,\alpha,\beta)=\sup\left\{\frac{\|u+v\|}{\displaystyle\min_{0< t<\frac{1}{\beta}}\bigg|\bigg|\frac{1}{\alpha}(1-\beta t)u+tv\bigg|\bigg|}:u,v\in S_X\right\}.$$
	\end{Proposition}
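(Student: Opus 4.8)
The plan is to derive both identities as reformulations of the defining supremum of $DW(X,\alpha,\beta)$, by a substitution that first places the two free vectors on $S_X$ and then exploits the homogeneity of the resulting quotient in their norms. First I would reduce to the unit sphere: given nonzero $p,q\in X$ with $p\neq q$, set $x=p/\|p\|$, $y=-q/\|q\|$ (so $x,y\in S_X$), $a=\|p\|>0$, $b=\|q\|>0$. Then $\left\|\frac{p}{\|p\|}-\frac{q}{\|q\|}\right\|=\|x+y\|$, $\|p-q\|=\|ax+by\|$ and $\alpha\|p\|+\beta\|q\|=\alpha a+\beta b$, so the defining quotient becomes $\frac{(\alpha a+\beta b)\|x+y\|}{\|ax+by\|}$. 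This correspondence becomes a bijection onto $\{(x,y,a,b):x,y\in S_X,\ a,b>0\}$ once the pairs with $ax+by=0$ (which force $x=-y$) are removed; on every pair with $x=-y$ the quotient equals $0$, and since the lower bound $DW(X,\alpha,\beta)\geq\alpha+\beta>0$ established above makes such pairs immaterial, I obtain
$$DW(X,\alpha,\beta)=\sup\left\{\frac{(\alpha a+\beta b)\|x+y\|}{\|ax+by\|}:x,y\in S_X,\ a,b>0\right\}.$$

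Next I would use that this quotient is unchanged under $(a,b)\mapsto(\lambda a,\lambda b)$ for $\lambda>0$. Dividing numerator and denominator by $\alpha a+\beta b$ and putting $t=\frac{b}{\alpha a+\beta b}$, one has $0<t<\frac1\beta$ (because $\alpha a>0$) and $\frac{a}{\alpha a+\beta b}=\frac1\alpha(1-\beta t)$ (because $\alpha a=(\alpha a+\beta b)-\beta b$), so the quotient is exactly $\frac{\|x+y\|}{\left\|\frac1\alpha(1-\beta t)x+ty\right\|}$, an element of the right-hand set of (1). Conversely, for given $x,y\in S_X$ and $t\in(0,\frac1\beta)$ the choice $a=1$, $b=\frac{\alpha t}{1-\beta t}>0$ reverses the substitution, so the two suprema coincide and (1) follows; the degenerate cases ($x=-y$, where the numerator vanishes, and the one pair with $x=-y$, $t=\frac1{\alpha+\beta}$, where the quotient is $0/0$) do not affect the supremum.

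For (2) I would split the supremum in (1) as $\sup_{x,y\in S_X}\sup_{0<t<1/\beta}$ and absorb the inner supremum into the denominator. For fixed $x,y\in S_X$ with $x\neq-y$, the map $t\mapsto\left\|\frac1\alpha(1-\beta t)x+ty\right\|$ is convex and continuous on $(0,\frac1\beta)$ and extends continuously to $[0,\frac1\beta]$ with endpoint values $\frac1\alpha$ and $\frac1\beta$; hence its infimum over $(0,\frac1\beta)$ coincides with its minimum over $[0,\frac1\beta]$, is attained, and is strictly positive (it can vanish only when $x=-y$, at $t=\frac1{\alpha+\beta}$). Reading the minimum over $0<t<1/\beta$ in the statement as this common value then gives (2).

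The main obstacle I anticipate is bookkeeping rather than analysis: one must keep careful track of the degenerate configurations ($ax+by=0$, $x=-y$, the $0/0$ pair) and verify they disturb none of the suprema — which works precisely because $DW(X,\alpha,\beta)\geq\alpha+\beta>0$ — and one must justify that the minimum over the open interval $(0,\frac1\beta)$ appearing in (2) is a genuine minimum, which follows from continuity together with the finite positive endpoint values $\frac1\alpha$, $\frac1\beta$. The two algebraic identities, notably $\frac{a}{\alpha a+\beta b}=\frac1\alpha(1-\beta t)$, and the inverse substitution $b=\frac{\alpha t}{1-\beta t}$ are routine.
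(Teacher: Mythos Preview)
Your approach is essentially the same as the paper's: both proofs normalize the two nonzero vectors to the unit sphere via $u=x/\|x\|$, $v=-y/\|y\|$, identify the resulting coefficients $\frac{\|x\|}{\alpha\|x\|+\beta\|y\|}$ and $\frac{\|y\|}{\alpha\|x\|+\beta\|y\|}$ with $\frac1\alpha(1-\beta t)$ and $t$ for $t\in(0,\frac1\beta)$, invert the substitution to get the reverse inequality, and then for (2) pass from the iterated supremum to a minimum in the denominator. You are in fact more careful than the paper about the degenerate configurations ($x=-y$, $ax+by=0$) and about why the infimum in (2) is positive and effectively attained---the paper simply asserts the existence of a minimizing $t_0\in(0,\frac1\beta)$ without argument.
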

	\begin{proof}
		First, for any $x,y\in X\backslash\{0\}$. Let $ u=\frac{x}{\|x\|},v=-\frac{y}{\|y\|}.$  Then, we have
		$$\begin{aligned}&\frac{\alpha\|x\|+\beta\|y\|}{\|x-y\|}\left\|\frac{x}{\|x\|}-\frac{y}{\|y\|}\right\|\\&=\frac{\|u+v\|}{\left\|\frac{\|x\|}{\alpha\|x\|+\beta\|y\|}u+\frac{\|y\|}{\alpha\|x\|+\beta\|y\|}v\right\|}\\&\leq\sup\left\{\frac{\|x+y\|}{\bigg|\bigg|\frac{1}{\alpha}(1-\beta t)x+ty\bigg|\bigg|}:x,y\in S_X,0< t<\frac{1}{\beta}\right\},\end{aligned}$$
		which implies that
		$$DW(X,\alpha,\beta)\leq\sup\left\{\frac{\|x+y\|}{\bigg|\bigg|\frac{1}{\alpha}(1-\beta t)x+ty\bigg|\bigg|}:x,y\in S_X,0< t<\frac{1}{\beta}\right\}.$$
		When $0< t<\frac{1}{\beta}$. Let $x=\frac{1}{\alpha}(1-\beta t)u\neq0,y=-tv\neq0.$
		$$\frac{\|u+v\|}{\|\frac{1}{\alpha}(1-\beta t)u+tv\|}=\frac{\alpha\|x\|+\beta\|y\|}{\|x-y\|}\left\|\frac{x}{\|x\|}-\frac{y}{\|y\|}\right\|\leq DW(X,\alpha,\beta).$$
		We obtain
		$$DW(X,\alpha,\beta)\geq\sup\left\{\frac{\|x+y\|}{\bigg|\bigg|\frac{1}{\alpha}(1-\beta t)x+ty\bigg|\bigg|}:x,y\in S_X,0< t<\frac{1}{\beta}\right\}.$$

		(2) By (1), it is evident that  
		$$DW(X,\alpha,\beta)\leq\sup\left\{\frac{\|u+v\|}{\displaystyle\min_{0< t<\frac{1}{\beta}}\bigg|\bigg|\frac{1}{\alpha}(1-\beta t)u+tv\bigg|\bigg|}:u,v\in S_X\right\}.$$
		For inverse inequality, since, for $u,v\in S_X,$ we must have 
		$$\min_{0< t<\frac{1}{\beta}}\bigg|\bigg|\frac{1}{\alpha}(1-\beta t)u+tv\bigg|\bigg|=\bigg|\bigg|\frac{1}{\alpha}(1-\beta t_0)u+t_0v\bigg|\bigg|$$
		for some $t_0\in(0,\frac{1}{\beta}),$ then, by using (1) again, we obtain
		$$DW(X,\alpha,\beta)\geq\sup\left\{\frac{\|u+v\|}{\displaystyle\min_{0< t<\frac{1}{\beta}}\bigg|\bigg|\frac{1}{\alpha}(1-\beta t)u+tv\bigg|\bigg|}:u,v\in S_X\right\}.$$
		This completes the proof.
	\end{proof}

	Next, we will present Lemma \ref{T1}. The technical means of the proof come from reference [10].
	\begin{Lemma}\label{T1}
		Let $X$ be a Banach space. If $(f_n)$ is sequence in $B_{X^{*}}$ and $(x_n)$ is a sequence in $B_X$ such that $\displaystyle\lim_{n\to\infty}f_{n}(x_{n})=1 $, then, for any sequence  $(g_n)$ in $B_{X^{*}}$  with $\lim\displaystyle\inf_{n\to\infty}g_{n}(x_{n})>0$, we have
		$$\begin{aligned}DW(X,\alpha,\beta)&\geq(\alpha+\beta)\max\left\{\operatorname*{liminf}_{n\to\infty}\left\|g_n(x_n)f_n-g_n\right\|,1\right\}\\&\geq(\alpha+\beta)\max\left\{\operatorname*{liminf}_{n\to\infty}g_n(x_n)\|f_n-g_n\|,1\right\}.\end{aligned}$$
	\end{Lemma}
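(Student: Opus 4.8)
The plan is to exploit the equivalent characterization in Proposition~2, specifically the form
$$DW(X,\alpha,\beta)\geq\frac{\|u+v\|}{\bigl\|\tfrac{1}{\alpha}(1-\beta t)u+tv\bigr\|}$$
valid for all $u,v\in S_X$ and $0<t<\tfrac1\beta$, and to feed in cleverly chosen unit vectors built from the sequences $(x_n)$, $(f_n)$, $(g_n)$. First I would observe that since $f_n(x_n)\to 1$ and $\liminf g_n(x_n)>0$, for large $n$ the scalar $s_n:=g_n(x_n)$ is bounded away from $0$; a natural choice is to set $u=x_n$ (which lies in $S_X$ up to the fact that $\|x_n\|\le 1$ — one handles this by noting $f_n(x_n)\le\|x_n\|\le 1$ forces $\|x_n\|\to 1$, so we may normalize or pass to $x_n/\|x_n\|$ with negligible error). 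Then the numerator $\|u+v\|$ should be estimated from below using the functional $g_n(x_n)f_n-g_n$ or $g_n\cdot\|f_n-g_n\|$: applying a suitable $v$ in $S_X$ so that $(g_n(x_n)f_n-g_n)(\text{something})$ is nearly its norm, and using that $f_n$ and $g_n$ both nearly peak at $x_n$.

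The core of the argument is the choice of $v$. Pick $v_n\in S_X$ (or near-$S_X$) on which the functional $h_n:=g_n(x_n)f_n-g_n$ nearly attains its norm, i.e. $h_n(v_n)\geq\|h_n\|-\varepsilon$. Then I would plug $u=x_n$, $v=v_n$ and choose the parameter $t$ in Proposition~2(1) so that the denominator $\bigl\|\tfrac1\alpha(1-\beta t)x_n+t v_n\bigr\|$ stays controlled (say $t\to 0$ or a specific $t$ making the denominator tend to $\tfrac1{\alpha\beta}\cdot$ something clean), while the numerator $\|x_n+v_n\|$ is bounded below by testing against $\tfrac1{g_n(x_n)}g_n$ or directly by $g_n(x_n)f_n(x_n+v_n)-g_n(x_n+v_n)$-type manipulations, which after using $f_n(x_n)\to1$, $g_n(x_n)=s_n$ collapses to roughly $s_n\|f_n-g_n\|$ and hence to $\|h_n\|$. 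Taking $\liminf$ and then optimizing over $\varepsilon$ and the auxiliary parameters yields the bound $DW(X,\alpha,\beta)\geq(\alpha+\beta)\liminf_n\|g_n(x_n)f_n-g_n\|$. The trivial lower bound $DW(X,\alpha,\beta)\geq\alpha+\beta$ already comes from Proposition~1, which accounts for the $\max\{\cdot,1\}$. The second inequality in the statement is the elementary estimate $\|g_n(x_n)f_n-g_n\|\geq g_n(x_n)\|f_n\|-\|g_n(x_n)f_n-g_n(x_n)g_n\|$... more directly, $\|g_n(x_n)f_n-g_n\|\ge \bigl|\,\|g_n(x_n)f_n\| - \|g_n(x_n)g_n - g_n(x_n)f_n + \dots\|\,\bigr|$; cleanly, write $g_n(x_n)f_n - g_n = g_n(x_n)(f_n-g_n) + (g_n(x_n)-1)g_n$, so by the triangle inequality $\|g_n(x_n)f_n-g_n\|\ge g_n(x_n)\|f_n-g_n\| - (1-g_n(x_n))$, and since $g_n(x_n)\to$ a value we can only say $\liminf$; combined with $\|f_n-g_n\|\le 2$ the lower order term is absorbed when one is careful — alternatively just note that passing to a subsequence where $g_n(x_n)\to c$ makes both $\liminf\|g_n(x_n)f_n-g_n\|$ and $\liminf g_n(x_n)\|f_n-g_n\|$ computable and the inequality between them is then the scalar identity.

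In carrying this out I would follow reference~[10]'s technique: fix $\varepsilon>0$, choose for each $n$ a norming vector $v_n$ for $h_n=g_n(x_n)f_n-g_n$, set $u_n=x_n/\|x_n\|$, and compute
$$\|u_n+v_n\|\ \ge\ \frac{h_n(u_n+v_n)}{\|h_n\|}\cdot\frac{\|h_n\|}{\|h_n\|}\ \ge\ \cdots$$
— more precisely bound $\|u_n+v_n\|\cdot\|h_n\|\ge h_n(u_n)+h_n(v_n)\ge h_n(u_n)+\|h_n\|-\varepsilon$, and show $h_n(u_n)=g_n(x_n)f_n(u_n)-g_n(u_n)\to 0$ because both $f_n(u_n)\to 1$ and $g_n(u_n)/g_n(x_n)\to 1$ (using $\|x_n\|\to1$). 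Simultaneously bound the denominator: with $u_n=x_n/\|x_n\|$ and $v_n$ as above, for a well-chosen $t_n\in(0,1/\beta)$ (for instance the minimizing $t_0$ from Proposition~2(2), whose existence is already granted there) the quantity $\bigl\|\tfrac1\alpha(1-\beta t_n)u_n+t_n v_n\bigr\|$ is at most $\tfrac1{\alpha+\beta}$ — this is exactly the content one needs, and indeed holds because the map $t\mapsto\tfrac1\alpha(1-\beta t)u+tv$ at $t=\tfrac{1}{\alpha+\beta}$ equals $\tfrac1{\alpha+\beta}(u+v)$ when... no: at $t=\tfrac1{\alpha+\beta}$ one gets $\tfrac1\alpha\bigl(1-\tfrac\beta{\alpha+\beta}\bigr)u+\tfrac1{\alpha+\beta}v=\tfrac1{\alpha+\beta}u+\tfrac1{\alpha+\beta}v=\tfrac1{\alpha+\beta}(u+v)$, so actually that denominator is $\tfrac1{\alpha+\beta}\|u+v\|$ and the ratio is exactly $\alpha+\beta$ — hence the \emph{minimum} over $t$ gives a denominator $\le\tfrac1{\alpha+\beta}\|u_n+v_n\|$, and Proposition~2(2) yields $DW(X,\alpha,\beta)\ge\dfrac{\|u_n+v_n\|}{\min_t\|\cdots\|}\ge\dfrac{\|u_n+v_n\|}{\text{(something)}}$; one still needs the minimizing denominator to be comparably small, which is where the sequences $(f_n),(g_n)$ enter to force $\|u_n+v_n\|$ large while the minimizing combination stays near a fraction of it.

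The main obstacle I anticipate is precisely controlling the minimizing denominator $\min_{0<t<1/\beta}\bigl\|\tfrac1\alpha(1-\beta t)u_n+t v_n\bigr\|$ from above by something like $\tfrac{c}{\alpha+\beta}$ with $c\to 1$: the naive choice $t=\tfrac1{\alpha+\beta}$ only gives $\tfrac1{\alpha+\beta}\|u_n+v_n\|$, which cancels and returns the trivial bound $\alpha+\beta$. To beat it one must instead take $t$ small (so $u_n$ dominates), using that $f_n$ nearly norms $u_n$: then $\bigl\|\tfrac1\alpha(1-\beta t)u_n+t v_n\bigr\|\approx \tfrac1\alpha f_n\bigl(\tfrac1\alpha(1-\beta t)u_n+tv_n\bigr)$ up to error, and the whole estimate reduces to a one-variable optimization of $\dfrac{\|u_n+v_n\|}{\tfrac1{\alpha^2}(1-\beta t)+\tfrac t\alpha f_n(v_n)}$ over $t$, whose supremum, after substituting the near-extremal relations $f_n(u_n)\to1$, $g_n(v_n)\approx -\|h_n\|+\dots$, works out to $(\alpha+\beta)\|h_n\|$. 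Making this rigorous — keeping track of the three $\varepsilon$-type slacks (norming $h_n$, $\|x_n\|\to1$, $f_n(x_n)\to1$) and showing they vanish in the $\liminf$ — is the delicate bookkeeping, but no single estimate is deep; it is the combinatorial assembly following [10] that carries the proof.
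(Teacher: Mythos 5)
Your strategy---feeding $u_n=x_n/\|x_n\|$ together with a norming vector $v_n$ of $h_n:=g_n(x_n)f_n-g_n$ into the equivalent form of Proposition~2---does not close, and the gap sits exactly where you flag the ``main obstacle.'' To lower-bound the ratio $\|u+v\|/\bigl\|\tfrac1\alpha(1-\beta t)u+tv\bigr\|$ you must bound the denominator from \emph{above}, but the only tool you propose is evaluation against $f_n$, which (since $f_n\in B_{X^*}$) yields only a \emph{lower} bound for a norm; your ``one-variable optimization'' therefore estimates the ratio in the wrong direction. Concretely, in your parametrization the limit $t\to0^+$ sends the denominator to $1/\alpha$ and the ratio to $\alpha\|u_n+v_n\|\le 2\alpha$, which for $\alpha=\beta$ is just the trivial bound $\alpha+\beta$ and in general is not $\ge(\alpha+\beta)\liminf_n\|h_n\|$, while $t=\tfrac1{\alpha+\beta}$ returns the trivial bound as you yourself observe; no intermediate $t$ is shown to do better. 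The missing idea is that the extremal pair is \emph{not} $(x_n,v_n)$ but the degenerate pair $(x_n,\,z_n)$ with $z_n=x_n+ty_n$, $y_n$ a norming vector of $h_n$, and $t\to0^+$: the paper works directly from the definition of $DW(X,\alpha,\beta)$, where both $\|x_n-z_n\|=t$ and the angular distance are of order $t$, and the chain
$$\bigl\|\,\|z_n\|x_n-z_n\bigr\|=\bigl\|(\|z_n\|-1)x_n-ty_n\bigr\|\ \ge\ (\|z_n\|-1)g_n(x_n)-tg_n(y_n)\ \ge\ t\,h_n(y_n)-(1-f_n(x_n))$$
(using $\|z_n\|\ge f_n(x_n)+tf_n(y_n)$ and $g_n(x_n)>0$) makes $h_n(y_n)>\varepsilon$ appear as the leading coefficient of $t$; dividing by $t$ and letting $t\to0^+$ gives $(\alpha+\beta)\varepsilon$. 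Nothing in your sketch produces this cancellation of the two $O(t)$ quantities, which is the whole content of the lemma.

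A secondary issue: your argument for the second inequality leaves the error term $-(1-g_n(x_n))$ from the decomposition $g_n(x_n)f_n-g_n=g_n(x_n)(f_n-g_n)+(g_n(x_n)-1)g_n$, and there is no hypothesis forcing $g_n(x_n)\to1$, so that term need not vanish; passing to a subsequence where $g_n(x_n)\to c$ does not reduce the comparison of the two liminfs to a ``scalar identity.'' The paper defers this step to reference [10], so this is a lesser matter, but as written your justification of it is not complete either.
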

	 \begin{proof}
	In the first place, we will show that
	$$DW(X,\alpha,\beta)\geq(\alpha+\beta)\max\left\{\operatorname*{liminf}_{n\to\infty}\|g_{n}(x_{n})f_{n}-g_{n}\|,1\right\}.$$
	If $\lim\displaystyle\inf_{n\to\infty}\|g_n(x_n)f_n-g_n\|\leq1$ , the inequality is immediately satisfied given that   $DW(X,\alpha,\beta)\geq\alpha+\beta$. Hence, assume that $\lim\displaystyle\inf_{n\to\infty}\|g_n(x_n)f_n-g_n\|>1$.
	Give $\varepsilon\in(1,\lim\displaystyle\inf_{n\to\infty}\|g_{n}(x_{n})f_{n}-g_{n}\|)$, there exists $n_{0}\geq1$ such that, for all $n\geq n_{0}$, the inequality $\|g_n(x_n)f_n-g_n\|>\varepsilon$ holds and we can then find $y_n\in S_X$ such that $(g_n(x_n)f_n-g_n)(y_n)>\varepsilon.$
	Let $\mathrm{t>0}$ and let us define, for each $n\geq n_{0}$, $z_{n}=x_{n}+ty_{n}$.
	By definition of $DW(X,\alpha,\beta)$, we have, for each $n\geq n_{0}$,
	$$\begin{aligned}DW(X,\alpha,\beta)&\geq\frac{\alpha\|x_n\|+\beta\|z_n\|}{\|x_n-z_n\|}\left\|\frac{x_n}{\|x_n\|}-\frac{z_n}{\|z_n\|}\right\|\\&=\frac{1}{t}\left(\frac{\alpha\|x_n\|}{\|z_n\|}+\beta\right)\left\|\frac{\|z_n\|}{\|x_n\|}x_n-z_n\right\|.\end{aligned}$$
	Since $(X_{n})$ is a sequence in $B_X$ and $ \displaystyle\operatorname*{lim}_{n\to\infty}f_{n}(x_{n})=1$, it must be $\displaystyle\operatorname*{lim}_{n\to\infty}\|x_{n}\|=1$ and therefore
	$$\begin{aligned}DW(X,\alpha,\beta)&\geq\frac{1}{t}\left(\frac{\alpha}{\displaystyle\lim\sup_{n\to\infty}\|z_{n}\|}+\beta\right)\operatorname*{liminf}_{n\to\infty}\left\|\frac{\|z_{n}\|}{\|x_{n}\|}x_{n}-z_{n}\right\|\\&=\frac{1}{t}\left(\frac{\alpha}{\displaystyle\lim\sup_{n\to\infty}\|z_{n}\|}+\beta\right)\operatorname*{lim}_{n\to\infty}\inf\left\|\|z_{n}\|x_{n}-z_{n}\right\|.\end{aligned}$$
	Moreover, for each $n\geq n_{0}$, we have
	$$\begin{Vmatrix}\|z_n\|x_n-z_n\end{Vmatrix}=\begin{Vmatrix}(\|z_n\|-1)x_n-ty_n\end{Vmatrix}\geq\left(\|z_n\|-1\right)g_n(x_n)+tg_n(-y_n),$$
	and, in addition,
	$$\|z_n\|=\|x_n+ty_n\|\geq f_n(x_n)+tf_n(y_n),$$
	so that
	$$\begin{aligned}\|\|z_{n}\|x_{n}-z_{n}\|&\geq\left(f_{n}(x_{n})+tf_{n}(y_{n})-1\right)g_{n}(x_{n})+tg_{n}(-y_{n})\\&\geq-\left(1-f_{n}(x_{n})\right)+t\varepsilon.\end{aligned}$$
	Therefore
	$$\lim_{n\to\infty}\inf\left\|\|z_n\|x_n-z_n\right\|\geq t\varepsilon,$$
	and in consequence
	$$\begin{aligned}DW(X,\alpha,\beta)&\geq\left(\frac{\alpha}{\displaystyle\lim\sup_{n\to\infty}\lVert z_n\rVert}+\beta\right)\varepsilon\\&=\left(\frac{\alpha}{\displaystyle\lim\sup_{n\to\infty}\lVert x_n+ty_n\rVert}+\beta\right)\varepsilon\\&\geq\left(\frac{\alpha}{1+t}+\beta\right)\varepsilon.\end{aligned}$$
	Letting $t\rightarrow0^{+}$, we obtain $DW(X,\alpha,\beta)\geq(\alpha+\beta)\epsilon$.
	We have proved that, for any $\varepsilon\in(1,\operatorname*{lim}\displaystyle\operatorname*{inf}_{n\to\infty}\|g_{n}(x_{n})f_{n}-g_{n}\|)$, the inequality $DW(X,\alpha,\beta)\geq(\alpha+\beta)\epsilon$ holds. Thus 
	$$DW(X,\alpha,\beta)\geq(\alpha+\beta)\lim_{n\to\infty}\inf\left\|g_n\left(x_n\right)f_n-g_n\right\|.$$
   Now, based on the proof provided in reference [10], we can confirm that the following inequality holds.
	$$\max\left\{\lim_{n\to\infty}\inf\left\|g_{n}(x_{n})f_{n}-g_{n}\right\|,1\right\}\geq\max\left\{\lim_{n\to\infty}\inf g_{n}(x_{n})\|f_{n}-g_{n}\|,1\right\}.$$
   This completes the proof.
	   
	\end{proof}

	\begin{Theorem}\label{t2}
	For every Banach space $\text{X}$, $DW(X,\alpha,\beta)\geq(\alpha+\beta)\max\{\varepsilon_{0}(X),1\}.$
	\end{Theorem}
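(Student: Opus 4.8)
The plan is to transplant, into the generalized setting, the classical argument that $DW(X)\ge 2\max\{\varepsilon_0(X),1\}$, using the just-established Lemma \ref{T1} as the main engine. First I would dispose of the trivial case: if $\varepsilon_0(X)\le 1$ then $\max\{\varepsilon_0(X),1\}=1$ and the assertion is nothing but the inequality $DW(X,\alpha,\beta)\ge\alpha+\beta$ already contained in Proposition 1, so there is nothing to do. The substance lies in the case $\varepsilon_0(X)>1$; there I would fix an arbitrary $\varepsilon\in(1,\varepsilon_0(X))$, aim to prove $DW(X,\alpha,\beta)\ge(\alpha+\beta)\varepsilon$, and finish by letting $\varepsilon\uparrow\varepsilon_0(X)$.

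To manufacture the data required by Lemma \ref{T1}, I would use that $\delta_X$ is nondecreasing, so $\delta_X(\varepsilon)=0$; by the $S_X$-version of the modulus of convexity there are $x_n,y_n\in S_X$ with $\|x_n-y_n\|\ge\varepsilon$ and $\|x_n+y_n\|\to 2$. Then, via Hahn--Banach, I would pick $f_n\in S_{X^{*}}$ norming $x_n+y_n$ and $g_n\in S_{X^{*}}$ norming $x_n-y_n$. From $f_n(x_n)+f_n(y_n)=\|x_n+y_n\|\to 2$ with each summand at most $1$, we get $f_n(x_n)\to 1$ and $f_n(y_n)\to 1$; and from $g_n(x_n)-g_n(y_n)=\|x_n-y_n\|\ge\varepsilon$ together with $g_n(y_n)\ge -1$, we get $g_n(x_n)\ge\varepsilon-1>0$, hence $\liminf_n g_n(x_n)>0$. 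Thus $(x_n)\subset B_X$, $(f_n),(g_n)\subset B_{X^{*}}$, $f_n(x_n)\to 1$ and $\liminf_n g_n(x_n)>0$, which is exactly the hypothesis of Lemma \ref{T1}.

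It then remains only to bound $\|g_n(x_n)f_n-g_n\|$ from below, and for this I would simply test the functional $g_n(x_n)f_n-g_n$ on the unit vector $y_n$: using $g_n(y_n)\le g_n(x_n)-\varepsilon$ and $0<g_n(x_n)\le 1$, $f_n(y_n)\le 1$,
$$(g_n(x_n)f_n-g_n)(y_n)=g_n(x_n)f_n(y_n)-g_n(y_n)\ge g_n(x_n)\bigl(f_n(y_n)-1\bigr)+\varepsilon\ge f_n(y_n)-1+\varepsilon\longrightarrow\varepsilon,$$
so $\liminf_n\|g_n(x_n)f_n-g_n\|\ge\varepsilon$. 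Plugging this into Lemma \ref{T1} gives $DW(X,\alpha,\beta)\ge(\alpha+\beta)\max\{\varepsilon,1\}=(\alpha+\beta)\varepsilon$ since $\varepsilon>1$, and letting $\varepsilon\uparrow\varepsilon_0(X)$ yields the claim.

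The step I expect to be the crux is the simultaneous control of the two competing demands on $g_n$: Lemma \ref{T1} insists on $\liminf_n g_n(x_n)>0$, while the conclusion needs $\|g_n(x_n)f_n-g_n\|$ to be at least $\varepsilon$ in the limit. Choosing $g_n$ to norm $x_n-y_n$ meets both requirements at once, but the positivity bound $g_n(x_n)\ge\varepsilon-1$ is informative only when $\varepsilon>1$ — which is precisely why the case $\varepsilon_0(X)\le 1$ must be peeled off at the outset and why the $\max\{\cdot,1\}$ shows up in the statement. The remaining ingredients (the Hahn--Banach selections, the elementary inequalities, and the limit passages) are routine.
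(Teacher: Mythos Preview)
Your proposal is correct and follows essentially the same route as the paper's proof: dispose of $\varepsilon_0(X)\le 1$ via Proposition~1, then pick unit-sphere sequences with $\|x_n+y_n\|\to 2$ and $\|x_n-y_n\|$ close to $\varepsilon_0(X)$, norm the sum by $f_n$ and the difference by $g_n$, verify the hypotheses of Lemma~\ref{T1}, and lower-bound $\|g_n(x_n)f_n-g_n\|$ by evaluating at $y_n$. The only cosmetic difference is that the paper works directly with sequences satisfying $\|u_n-v_n\|\to\varepsilon_0(X)$, whereas you fix $\varepsilon<\varepsilon_0(X)$ with $\|x_n-y_n\|\ge\varepsilon$ and then let $\varepsilon\uparrow\varepsilon_0(X)$; both are standard and equivalent here.
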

	\begin{proof}
    Since we know that $DW(X,\alpha,\beta)\geq(\alpha+\beta)$
	, we only need to show that $DW(X,\alpha,\beta)\geq(\alpha+\beta)\varepsilon_{0}(X).$
	If $\varepsilon_{0}(X)\leq1$ nothing needs to be proved. Otherwise, there exist two sequences $\{u_{n}\}$ and $\{v_{n}\}$ in $S_X$ such that $\|u_{n}-v_{n}\|\to\varepsilon_{0}(X)$ and $\|u_n+v_n\|\to2$
	Consider, for each $n\geq1,f_{n},g_{n}\in S_{X^{*}}$ such that $f_n(u_n+v_n)=\|u_n+v_n\|$ and $g_n(u_n-v_n)=\|u_n-v_n\|$. Observe that 
	$$\lim_{n\to\infty}f_n(u_n)=\lim_{n\to\infty}f_n(v_n)=1,$$
	since
	$$\lim_{n\to\infty}\left(f_n(u_n)+f_n(v_n)\right)=\lim_{n\to\infty}\lVert u_n+v_n\rVert=2,$$
	
and $|f_{n}(u_{n})|\leq1,|f_{n}(v_{n})|\leq1.$
	
	In addition,
	$$\begin{aligned}&\operatorname*{liminf}_{n\to\infty}g_n(u_n)\\&=\operatorname*{liminf}_{n\to\infty}\left(\lVert u_n-v_n\rVert+g_n(v_n)\right)\\&\geq\varepsilon_0(X)-1\\&>0,\end{aligned}$$
	and hence, by Lemma \ref{T1}, we can obtain that
	$$\begin{aligned}DW(X,\alpha,\beta)& \geq (\alpha+\beta)\liminf_{n \to \infty}\|g_n(u_n)f_n - g_n(v_n)\|\\& \geq (\alpha+\beta)\liminf_{n \to \infty}(g_n(u_n)f_n - g_n(v_n)) \\& = (\alpha+\beta)\lim_{n \to \infty}g_n(u_n - v_n) \\& = (\alpha+\beta)\lim_{n \to \infty}\|\mu_n - v_n\|\\& = (\alpha+\beta)\varepsilon_0(X).\end{aligned}$$
	\end{proof}
	
For the lower bound, we have established a direct connection between $DW(X,\alpha,\beta)$ and $\varepsilon_{0}(X)$. Next, we will establish a connection between $DW(X,\alpha,\beta)$ and $J(X)$.

	\begin{Theorem}\label{t1}
	 For any Banach space $X$ we have that
	 $$\begin{aligned}DW(X,\alpha,\beta)&\leq\sup_{0\leq t\leq2}\min\left\{2(\alpha+\beta)-\frac{\alpha+\beta}{2}\delta_X(t),(\alpha+\beta)+\frac{\alpha+\beta}{2}t\right\}\\&=(\alpha+\beta)+\frac{\alpha+\beta}{2}J(X).\end{aligned}$$
	\end{Theorem}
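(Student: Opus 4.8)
\emph{Strategy.} The idea is to use the equivalent description from Proposition~2, bound the relevant quotient configuration–by–configuration by the minimum of a ``metric'' estimate and a ``uniform convexity'' estimate, and then to evaluate the resulting $\sup$–$\min$ by a one–variable argument relating the modulus of convexity to the James constant.

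\emph{Reduction.} By Proposition~2(1) it suffices to bound $\frac{\|x+y\|}{\|\frac1\alpha(1-\beta t)x+ty\|}$ over $x,y\in S_X$ and $0<t<\frac1\beta$. Setting $p=\frac1\alpha(1-\beta t)>0$ and $q=t>0$, so that $\alpha p+\beta q=1$, the quantity to estimate is $\frac{\|x+y\|}{\|px+qy\|}$ with $x,y\in S_X$ and $p,q>0$, $\alpha p+\beta q=1$. Fix such data and put $\varepsilon:=\|x-y\|\in[0,2]$. Two inputs will be used repeatedly: from the definition of $\delta_X$, $\|x+y\|\le 2\bigl(1-\delta_X(\varepsilon)\bigr)$; and from $\alpha p+\beta q=1$, $\max\{p,q\}\ge\frac1{\alpha+\beta}$, hence $p+q\ge\frac1{\alpha+\beta}$.

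\emph{The two estimates.} For the metric one, evaluating norming functionals of $x$ and of $y$ gives $\|px+qy\|\ge (p+q)-\varepsilon\min\{p,q\}$, which together with $\|px+qy\|\ge\max\{p,q\}\,\|x+y\|-|p-q|$ and the optimisation of $\frac{\|x+y\|}{\|px+qy\|}$ over the admissible pairs $(p,q)$ produces a bound of the form $(\alpha+\beta)+\frac{\alpha+\beta}{2}\varepsilon$. For the uniform–convexity one, the identity $px+qy=\frac{p+q}{2}(x+y)+\frac{p-q}{2}(x-y)$, combined with the companion inequality $\|x-y\|\le 2\bigl(1-\delta_X(\|x+y\|)\bigr)$ and with $\|x+y\|\le 2\bigl(1-\delta_X(\varepsilon)\bigr)$, leads (after the same optimisation over $(p,q)$) to a bound of the form $2(\alpha+\beta)-\frac{\alpha+\beta}{2}\delta_X(\varepsilon)$. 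Both estimates hold for every $x,y\in S_X$ with $\|x-y\|=\varepsilon$ and every admissible $(p,q)$, so taking the minimum and then the supremum over $\varepsilon\in[0,2]$ gives
$$DW(X,\alpha,\beta)\le\sup_{0\le t\le 2}\min\Bigl\{\,2(\alpha+\beta)-\tfrac{\alpha+\beta}{2}\delta_X(t),\ (\alpha+\beta)+\tfrac{\alpha+\beta}{2}t\,\Bigr\}.$$

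\emph{Evaluating the supremum, and the main obstacle.} After dividing by $\alpha+\beta$ one compares a nonincreasing function of $t$ built from $\delta_X$ (taking the value $2$ at $t=0$, since $\delta_X(0)=0$) with an increasing line (equal to $1$ at $t=0$ and to $2$ at $t=2$); using that $\delta_X$ is nondecreasing and continuous on $[0,2)$ together with the well–known description $J(X)=\sup\{t\in[0,2]:\delta_X(t)\le 1-\tfrac t2\}$, a direct computation shows that the two branches of the minimum balance at $t=J(X)$, that the supremum is attained there, and that it equals $(\alpha+\beta)+\frac{\alpha+\beta}{2}J(X)$ (when $J(X)=2$ the bound reduces to $2(\alpha+\beta)$, consistent with Proposition~1). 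The step I expect to be genuinely delicate is the uniform–convexity estimate: extracting the precise $\delta_X$–dependence forces a case split according to whether $\|x+y\|$ is near $0$ or near $2$ — equivalently whether $\varepsilon$ is large or small — choosing in each case the right decomposition of $px+qy$ and the right inequality between $\|x\pm y\|$ before carrying out the optimisation over $(p,q)$; the asymmetry of $\alpha$ and $\beta$ adds bookkeeping here and must be absorbed through $\max\{p,q\}\ge\frac1{\alpha+\beta}$. The metric estimate and the final one–variable identity are then routine.
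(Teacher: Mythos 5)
Your reduction via Proposition~2 and your choice of parameter are where the argument breaks. After passing to $u,v\in S_X$ (your renamed $x,y$) and setting $\varepsilon:=\|x-y\|$, your ``metric'' estimate asserts the pointwise bound $\frac{\|x+y\|}{\|px+qy\|}\leq(\alpha+\beta)+\frac{\alpha+\beta}{2}\varepsilon$ for all admissible $(p,q)$ with $\alpha p+\beta q=1$. This is false whenever $\alpha\neq\beta$: take $x=y$ (so $\varepsilon=0$) and $q=t\to0^{+}$, $p=\frac{1-\beta t}{\alpha}\to\frac1\alpha$; then $\frac{\|x+y\|}{\|px+qy\|}=\frac{2}{p+q}\to2\alpha$, which exceeds your claimed bound $\alpha+\beta$ as soon as $\alpha>\beta$ (the constraint $\alpha p+\beta q=1$ only gives $p+q\geq\frac1{\max\{\alpha,\beta\}}$, not $\frac{2}{\alpha+\beta}$, so the asymmetry cannot be ``absorbed'' as you hope). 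The ``uniform-convexity'' estimate suffers the same defect: in $(\mathbb{R}^2,\|\cdot\|_\infty)$ with $x=(1,1)$, $y=(1,-1)$ one has $\varepsilon=2$, $\delta_X(2)=1$, yet the quotient can approach $2\max\{\alpha,\beta\}$, violating $2(\alpha+\beta)-\frac{\alpha+\beta}{2}\delta_X(\varepsilon)=\frac32(\alpha+\beta)$ when $\max\{\alpha,\beta\}>3\min\{\alpha,\beta\}$. Since your final step is exactly ``take the min of the two pointwise bounds, then sup over $\varepsilon$,'' the conclusion does not follow from what you have; moreover both key estimates are only asserted (``produces a bound of the form\dots'') at precisely the place where they fail, so this is a genuine gap, not a bookkeeping issue.

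The paper's proof does not reduce to unit vectors at all. It works with the original $x,y\neq0$, writes the quantity as $\left\|\frac{\alpha\|x\|+\beta\|y\|}{\|x\|}x-\frac{\alpha\|x\|+\beta\|y\|}{\|y\|}y\right\|\|x-y\|^{-1}$, splits it by the triangle inequality into an $\alpha$-part and a $\beta$-part, applies the definition of $\delta_X$ to each part, and after a three-case discussion ($\|y\|\leq\|x\|/2$, $\|x\|/2<\|y\|\leq2\|x\|$, $\|y\|>2\|x\|$) obtains the convexity bound $2(\alpha+\beta)-(\alpha+\beta)\delta_X\!\left(\frac{2|\,\|x\|-\|y\|\,|}{\|x-y\|}\right)$, while a second triangle-inequality application gives $(\alpha+\beta)+(\alpha+\beta)\frac{|\,\|x\|-\|y\|\,|}{\|x-y\|}$. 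The sup--min is then taken in the variable $t=\frac{2|\,\|x\|-\|y\|\,|}{\|x-y\|}\in[0,2]$ --- a quantity invisible after your normalization --- and only the concluding one-variable comparison with $J(X)=\sup\{\varepsilon:\delta_X(\varepsilon)\leq1-\varepsilon/2\}$ matches the part of your argument that is correct. If you want to salvage your route, you would need pointwise estimates in a parameter that, like the paper's $t$, records the imbalance of $\|x\|$ and $\|y\|$ (equivalently, the position of the minimizing $t_0$ in Proposition~2(2)), rather than the distance $\|x-y\|$ between the normalized vectors.
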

	\begin{proof}

		Let $x,y\in X$ with $x\neq0,y\neq0,x-y\neq0$ .
		Using the triangle inequality
		$$\begin{aligned}&\left\|{\frac{\alpha\|x\|+\beta\|y\|}{\|x\|}}x-{\frac{\alpha\|x\|+\beta\|y\|}{\|y\|}}y\right\|\|x-y\|^{-1}\\&\leq\left\|\alpha x-\alpha{\frac{\|x\|}{\|y\|}}y\right\|\|x-y\|^{-1}+\left\|\beta{\frac{\|y\|}{\|x\|}}x-\beta y\right\|\|x-y\|^{-1}\\&=\alpha\left\|\frac{x-y}{\|x-y\|}+\frac{y-\frac{\|x\|}{\|y\|}y}{\|x-y\|}\right\|+\beta\left\|\frac{\frac{\|y\|}{\|x\|}x-x}{\|x-y\|}+\frac{x-y}{\|x-y\|}\right\|.\end{aligned}$$
		By the definition of $\delta_{X}$,
			$$\begin{aligned}&\left\|{\frac{\alpha\|x\|+\beta\|y\|}{\|x\|}}x-{\frac{\alpha\|x\|+\beta\|y\|}{\|y\|}}y\right\|\|x-y\|^{-1}\\&\leq2\alpha\left(1-\delta_{X}\left(\frac{\|x+y(\frac{\|x\|}{\|y\|}-2)\|}{\|x-y\|}\right)\right)+2\beta\left(1-\delta_{X}\left(\frac{\|y+x(\frac{\|y\|}{\|x\|}-2)\|}{\|x-y\|}\right)\right)\\&\leq2\alpha\left(1-\delta_{X}\left(\frac{|\|x\|-|\|x\|-2\|y\|||}{\|x-y\|}\right)\right)+2\beta\left(1-\delta_{X}\left(\frac{|\|y\|-|\|y\|-2\|x\|||}{\|x-y\|}\right)\right).
		\end{aligned}$$
	From the above relation it is straightforward to obtain that
	$$\begin{aligned}&\left\|{\frac{\alpha\|x\|+\beta\|y\|}{\|x\|}}x-{\frac{\alpha\|x\|+\beta\|y\|}{\|y\|}}y\right\|\|x-y\|^{-1}\\&\leq2(\alpha+\beta)-(\alpha+\beta)\delta_{X}\left({\frac{2|\|x\|-\|y\||}{\|x-y\|}}\right),\end{aligned}$$
	discuss separately the three possibilities: $\|y\|\leq\|x\|/2,\|x\|/2<\|y\|\leq2\|x\|\mathrm{~or~}\|y\|>2\|x\|.$
	On the other hand, using again the triangle inequality in 
	$$\begin{aligned}&\left\|{\frac{\alpha\|x\|+\beta\|y\|}{\|x\|}}x-{\frac{\alpha\|x\|+\beta\|y\|}{\|y\|}}y\right\|\|x-y\|^{-1}\\&\leq(\alpha+\beta)+(\alpha+\beta){\frac{|\|x\|-\|y\||}{\|x-y\|}}.\end{aligned}$$
	We have obtained two upper bounds for $\left\|{\frac{\alpha\|x\|+\beta\|y\|}{\|x\|}}x-{\frac{\alpha\|x\|+\beta\|y\|}{\|y\|}}y\right\|\|x-y\|^{-1}$, and consequently the following one,
	$$\begin{aligned}&\left\|{\frac{\alpha\|x\|+\beta\|y\|}{\|x\|}}x-{\frac{\alpha\|x\|+\beta\|y\|}{\|y\|}}y\right\|\|x-y\|^{-1}\\&\leq\min\left\{2(\alpha+\beta)-(\alpha+\beta)\delta_{X}\left(\frac{2\|x\|-\|y\||}{\|x-y\|}\right),(\alpha+\beta)+(\alpha+\beta)\frac{|\|x\|-\|y\||}{\|x-y\|}\right\}\\&\leq\sup_{0\leq t\leq2}\min\{2(\alpha+\beta)-(\alpha+\beta)\delta_{X}(t),(\alpha+\beta)+\frac{(\alpha+\beta)}{2}t\}.	\end{aligned}$$
	We conclude that
	$$\begin{aligned}DW(X,\alpha,\beta)&=\sup\left\{\left\|{\frac{\alpha\|x\|+\beta\|y\|}{\|x\|}}x-{\frac{\alpha\|x\|+\beta\|y\|}{\|y\|}}y\right\|\|x-y\|^{-1}{:}x\neq0,y\neq0,x-y\neq0\right\}\\&\leq\sup_{0\leq t\leq2}\min\left\{2(\alpha+\beta)-(\alpha+\beta)\delta_X(t),(\alpha+\beta)+\frac{(\alpha+\beta)}{2}t\right\},\end{aligned}$$
	as desired.
	
	Consider the function $f:[0,2]\to[2,4]$ defined by
	$$f(t)=\min\left\{2(\alpha+\beta)-(\alpha+\beta)\delta_X(t),(\alpha+\beta)+\frac{(\alpha+\beta)}{2}t\right\}.$$
	To complete the proof we have to show that
	$$\sup_{0\leq t\leq2}f(t)=(\alpha+\beta)+\frac{(\alpha+\beta)}{2}J(X).$$
	Observe that if $\varepsilon_{0}(X)=2$,or equivalently $ J(X)=2,$
	we have, for $0\leq t<2$
	$$2(\alpha+\beta)-(\alpha+\beta)\delta_X(t)=2(\alpha+\beta)>(\alpha+\beta)+\frac{(\alpha+\beta)}{2}t,$$
	and then
	$$\sup_{0\leq t\leq2}f(t)=2(\alpha+\beta)=(\alpha+\beta)+\frac{(\alpha+\beta)}{2}J(X).$$
	Otherwise, i.e. if $\varepsilon_{0}(X)<2$ , the continuity of $\delta_{X}$ in $[0,2)$ gives the existence of a solution to the equation
	$$2(\alpha+\beta)-(\alpha+\beta)\delta_X(t)=(\alpha+\beta)+\frac{(\alpha+\beta)}{2}t,$$
	in the interval $[\varepsilon_0(X),2)$. Moreover, this solution is unique because $\phi_{1}(t)=2(\alpha+\beta)-(\alpha+\beta)\delta_{X}(t)$ is nonincreasing and $\phi_{2}(t)=(\alpha+\beta)+\frac{(\alpha+\beta)}{2}t$ is strictly increasing. If we denote this solution by $t_{X} $, it is clear that
	$$\begin{aligned}t_X&=\sup\left\{t\in[0,2] : 2(\alpha+\beta)-(\alpha+\beta)\delta_X(t)>(\alpha+\beta)+\frac{(\alpha+\beta)}{2}t\right\}\\&=\sup\left\{t\in[0,2] : \frac{(\alpha+\beta)}{2}-\frac{(\alpha+\beta)t}{4}>\frac{(\alpha+\beta)}{2}\delta_X(t)\right\},\end{aligned}$$
	and also that $f$ attains its maximum value at $t_{X} $ because $f$ is increasing on $(0,t_{X})$, decreasing on $(t_{X},2)$ and continuous on $(0,2)$.We have then that
	$$\sup_{0\leq t\leq2}f(t)=(\alpha+\beta)+\frac{(\alpha+\beta)}{2}t_X.$$
	On the other hand, it was proved in [20] that
	$$J(X)=\sup\left\{\varepsilon\in(0,2) : \delta_X(\varepsilon)\leq1-\frac{\varepsilon}{2}\right\},$$
	and thus $t_{X}=J(X)$, which finishes the proof.

	\end{proof}
	\begin{Corollary}\label{t3}
		For any Banach space $X$, we have that
		$$(\alpha+\beta)\max\begin{Bmatrix}\varepsilon_0(X),1\end{Bmatrix}\leq DW(X,\alpha,\beta)\leq(\alpha+\beta)+\frac{(\alpha+\beta)}{2}J(X).$$
		
	\end{Corollary}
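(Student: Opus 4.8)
The plan is to simply assemble the two one-sided estimates already established. The left-hand inequality $(\alpha+\beta)\max\{\varepsilon_0(X),1\}\leq DW(X,\alpha,\beta)$ is precisely the content of Theorem \ref{t2}, so nothing further is needed there. The right-hand inequality $DW(X,\alpha,\beta)\leq(\alpha+\beta)+\frac{(\alpha+\beta)}{2}J(X)$ is exactly the conclusion of Theorem \ref{t1}, where the chain of triangle-inequality estimates together with the identity $\sup_{0\le t\le2}f(t)=(\alpha+\beta)+\frac{\alpha+\beta}{2}J(X)$ was carried out in full.

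Concretely, I would write: ``Combining Theorem \ref{t2} and Theorem \ref{t1} yields immediately
$$(\alpha+\beta)\max\{\varepsilon_0(X),1\}\le DW(X,\alpha,\beta)\le (\alpha+\beta)+\frac{(\alpha+\beta)}{2}J(X),$$
which is the assertion.'' If one wants a sanity check on consistency, one can observe that these bounds are compatible: since $\varepsilon_0(X)\le J(X)$ always holds (indeed $J(X)\ge\varepsilon_0(X)$ follows from the characterization $J(X)=\sup\{\varepsilon\in(0,2):\delta_X(\varepsilon)\le 1-\varepsilon/2\}$ used in the proof of Theorem \ref{t1}, together with $\delta_X(\varepsilon_0(X))=0\le 1-\varepsilon_0(X)/2$), the lower bound never exceeds $(\alpha+\beta)\max\{J(X),1\}$, while the upper bound is at least $(\alpha+\beta)+\frac{\alpha+\beta}{2}\cdot 1\ge \alpha+\beta$ and at least $(\alpha+\beta)+\frac{\alpha+\beta}{2}J(X)\ge(\alpha+\beta)J(X)$ once $J(X)\le 2$; in the uniformly nonsquare regime ($J(X)<2$) this shows the interval is non-degenerate.

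There is essentially no obstacle here: the corollary is a formal consequence of the two preceding theorems, and the only mild point worth stating explicitly is that the lower and upper expressions are indeed consistent (i.e.\ the lower bound does not exceed the upper one), which follows from $\varepsilon_0(X)\le J(X)\le 2$ and $J(X)\ge 1$ not being needed — even when $J(X)<2$ one has $\max\{\varepsilon_0(X),1\}\le 1+\tfrac12 J(X)$ since $\varepsilon_0(X)\le J(X)\le 1+\tfrac12 J(X)$ and $1\le 1+\tfrac12 J(X)$. Hence the proof reduces to one displayed line citing Theorems \ref{t2} and \ref{t1}.
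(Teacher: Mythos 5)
Your proof is correct and matches the paper exactly: the corollary is stated without proof precisely because it is the immediate conjunction of Theorem \ref{t2} (lower bound) and Theorem \ref{t1} (upper bound), which is what you do. The additional consistency discussion is harmless but unnecessary.
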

	\begin{Example}
		For $\mu\geq1$ let $X_{\mu}$ be the space $\ell_{2}$ endowed with the norm
		$$|x|_\mu=\max \left\{\|x\|_2, \mu\|x\|_{\infty}\right\}.$$
		The space  $X_{\mu}$ have been extensively studied because they play a major role in metric fixed point theory. It is well known that.
$$\varepsilon_0(X_\mu)=\left\{\begin{array}{cc}2(\mu^2-1)^{\frac{1}{2}},&\mu\leq\sqrt{2},\\2,&\mu\geq\sqrt{2},\end{array}\right.$$
and it was also shown in [6] that 
$$J(X_\mu)=\min\{2,\mu\sqrt{2}\}.$$
In particular, for $1<\mu<\sqrt{2}$ , we have that $J(X_\mu)=\mu\sqrt{2}.$  Therefore the above corollary yields
$$2(\alpha+\beta)(\mu^2-1)^{\frac{1}{2}}\leq DW(X_\mu,\alpha,\beta)\leq(\alpha+\beta)+\frac{(\alpha+\beta)}{2}\mu\sqrt{2},$$
provided that $1\leq\mu\leq\sqrt{2}$.
	\end{Example}
	 \begin{Theorem}
	 	Let $X$ be a Banach space with $DW(X,\alpha,\beta)<2(\alpha+\beta)$ and let $Y$ be a Banach space isomorphic to $X$. Then 
	 	$$DW(Y,\alpha,\beta)\leq(\alpha+\beta)+\frac{(\alpha+\beta)}{2}J(X)d(X,Y).$$
	 	\end{Theorem}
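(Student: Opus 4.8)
The plan is to combine the previous theorem's upper bound $DW(X,\alpha,\beta)\leq(\alpha+\beta)+\tfrac{\alpha+\beta}{2}J(X)$ with the standard Banach–Mazur distance technique for transferring geometric constants between isomorphic spaces. Recall that $d(X,Y)=\inf\{\|T\|\,\|T^{-1}\|\}$, the infimum taken over all isomorphisms $T\colon X\to Y$. The key observation is that $DW(Y,\alpha,\beta)$ is computed from the equivalent James-type formula in Theorem~\ref{t1}, and what really appears there is $J(Y)$. So the first step is to prove the auxiliary inequality $J(Y)\leq J(X)\,d(X,Y)$ — or more precisely, that for any isomorphism $T\colon X\to Y$ one has a bound on the relevant quantity in terms of $\|T\|\,\|T^{-1}\|$ and $J(X)$. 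Actually, since $DW(X,\alpha,\beta)<2(\alpha+\beta)$ forces $J(X)<2$, i.e.\ $X$ (and hence $Y$) is uniformly nonsquare, all the constants in play are strictly below their trivial maxima, which is what makes the transfer meaningful.

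Concretely, I would fix an isomorphism $T\colon X\to Y$ and, given $u,v\in S_Y$, pull them back: set $x=T^{-1}u$, $y=T^{-1}v$ in $X$. Then estimate the defining ratio for $DW(Y,\alpha,\beta)$ — using whichever of the two equivalent forms from Proposition~2 is most convenient, or directly the original definition — by inserting $T$ and $T^{-1}$ and absorbing the norm factors. The arithmetic is: each occurrence of a norm $\|\cdot\|_Y$ gets replaced by $\|\cdot\|_X$ at the cost of a factor $\|T\|$ or $\|T^{-1}\|$, and after cancellation one is left with a ratio of the $DW(X,\alpha,\beta)$-type multiplied by $\|T\|\,\|T^{-1}\|$. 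Taking the supremum over $u,v$ and then the infimum over $T$ gives $DW(Y,\alpha,\beta)\leq DW(X,\alpha,\beta)\,d(X,Y)$. But that is not yet the stated bound.

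To sharpen it to $(\alpha+\beta)+\tfrac{\alpha+\beta}{2}J(X)d(X,Y)$, the right move is to not scale the whole $DW$ ratio but to scale only the "$J(X)$ part." That is, go through the proof of Theorem~\ref{t1} directly: the upper bound there was built from $\min\{2(\alpha+\beta)-(\alpha+\beta)\delta_X(\cdot),(\alpha+\beta)+(\alpha+\beta)\tfrac{|\|x\|-\|y\||}{\|x-y\|}\}$, and the quantity $\tfrac{|\|x\|-\|y\||}{\|x-y\|}$ ultimately feeds into $J$. So I would show that under the isomorphism the additive term transforms with the factor $d(X,Y)$ while the constant $(\alpha+\beta)$ survives untouched — the point being that the term $(\alpha+\beta)$ comes from the two triangle-inequality endpoints $\|\tfrac{x-y}{\|x-y\|}\|=1$, which are isometry-invariant, and only the "curvature defect" term scales. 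This mirrors the classical result (due to the same school, cf.\ the reference for Theorem~\ref{t1}) that $J(Y)\leq 1+ (J(X)-1)d(X,Y)$ type estimates, or more cleanly that the James constant obeys $J(Y)\le J(X)d(X,Y)$ when things are uniformly nonsquare.

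The main obstacle I anticipate is making the "only the second term scales" claim precise and rigorous rather than heuristic: one has to track exactly where $d(X,Y)$ enters in the chain of triangle-inequality estimates of Theorem~\ref{t1} and verify that the modulus-of-convexity term $\delta_X$ — which does \emph{not} behave well under isomorphisms — can be bypassed in favor of the linear term $(\alpha+\beta)+\tfrac{\alpha+\beta}{2}t$ for the range of $t$ that matters. The hypothesis $DW(X,\alpha,\beta)<2(\alpha+\beta)$, equivalently $J(X)<2$, is exactly what guarantees the relevant supremum in $f(t)$ is attained at $t_X=J(X)<2$, in the region where the second (linear) bound is active near the maximum; this is the structural fact that lets the estimate go through. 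Once that is in hand, substituting $t_X=J(X)$, rescaling the linear term by $d(X,Y)$, and taking the infimum over isomorphisms yields the claimed inequality; I would close by remarking that it recovers Theorem~\ref{t1} when $Y=X$ (so $d(X,X)=1$).
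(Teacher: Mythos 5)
Your plan is essentially the paper's proof: the paper applies Corollary \ref{t3} (equivalently Theorem \ref{t1}) to $Y$ to get $DW(Y,\alpha,\beta)\leq(\alpha+\beta)+\frac{\alpha+\beta}{2}J(Y)$ and then simply cites the known inequality $J(Y)\leq J(X)d(X,Y)$ from Kato--Maligranda--Takahasi, which is exactly the reduction you identify in your first paragraph. The one weak spot is your proposed route for establishing that auxiliary inequality: tracing through the proof of Theorem \ref{t1} and arguing that ``only the second term scales'' would force you to deal with $\delta_Y$, which, as you yourself note, does not transform well under isomorphisms; the clean argument works directly from the definition of $J$ --- for an isomorphism $T\colon X\to Y$ normalized so that $\|T^{-1}\|=1$, pull $u,v\in S_Y$ back to $x=T^{-1}u,\ y=T^{-1}v\in B_X$ and estimate $\min\{\|u+v\|,\|u-v\|\}\leq\|T\|\min\{\|x+y\|,\|x-y\|\}\leq\|T\|\,\|T^{-1}\|J(X)$ --- and in fact needs no nonsquareness hypothesis at all. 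Once that inequality is in hand (by citation or by this one-line argument), your conclusion follows exactly as in the paper.
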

	 \begin{proof}
	 	It was shown in [6] that $J(Y)\leq J(X)d(X,Y)$, and then the result follows from Corollary \ref{t3}.
	 \end{proof}
	
	\begin{Corollary}\label{t4}
	 Suppose that $X$ is a Hilbert space and that $Y$ is a Banach space isomorphic to $X$. Then 
	 $DW(Y,\alpha,\beta)\leq(\alpha+\beta)+\sqrt{2}\frac{(\alpha+\beta)}{2}J(X)d(X,Y).$  In particular, if $d(X,Y)<\sqrt{2},$ then $DW(Y,\alpha,\beta)<2(\alpha+\beta).$
	\end{Corollary}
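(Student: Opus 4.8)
The plan is to obtain this statement as a direct specialization of the Theorem immediately preceding it, whose only standing hypothesis on $X$ is the strict inequality $DW(X,\alpha,\beta)<2(\alpha+\beta)$. Thus the entire argument reduces to two routine facts about a Hilbert space $X$: that $J(X)=\sqrt{2}$, and that this forces $DW(X,\alpha,\beta)<2(\alpha+\beta)$, so that the preceding Theorem is applicable to $X$.

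First I would record that for a Hilbert (indeed any inner-product) space the modulus of convexity is $\delta_X(\varepsilon)=1-\sqrt{1-\varepsilon^{2}/4}$ for $\varepsilon\in[0,2]$. Using the identity $J(X)=\sup\{\varepsilon\in(0,2):\delta_X(\varepsilon)\le1-\varepsilon/2\}$ already quoted from [20] in the proof of Theorem \ref{t1}, a short computation shows that $\delta_X(\varepsilon)\le1-\varepsilon/2$ holds exactly for $\varepsilon\le\sqrt{2}$, so $J(X)=\sqrt{2}$ (this is of course the classical value of the James constant of an inner-product space and may simply be cited). Substituting $J(X)=\sqrt{2}$ into Theorem \ref{t1} then gives
$$DW(X,\alpha,\beta)\le(\alpha+\beta)+\frac{\alpha+\beta}{2}\sqrt{2}=(\alpha+\beta)\,\frac{2+\sqrt{2}}{2}<2(\alpha+\beta),$$
so $X$ meets the hypothesis of the preceding Theorem.

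It remains only to invoke that Theorem: since $Y$ is isomorphic to $X$ and $DW(X,\alpha,\beta)<2(\alpha+\beta)$, it yields $DW(Y,\alpha,\beta)\le(\alpha+\beta)+\frac{\alpha+\beta}{2}J(X)\,d(X,Y)$, and inserting $J(X)=\sqrt{2}$ produces the asserted bound $DW(Y,\alpha,\beta)\le(\alpha+\beta)+\frac{\sqrt{2}(\alpha+\beta)}{2}d(X,Y)$. For the final assertion, if $d(X,Y)<\sqrt{2}$ then $\tfrac{\sqrt{2}}{2}d(X,Y)<1$, whence the right-hand side above is strictly below $(\alpha+\beta)+(\alpha+\beta)=2(\alpha+\beta)$, giving $DW(Y,\alpha,\beta)<2(\alpha+\beta)$.

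I do not anticipate a real obstacle here, since the corollary is essentially book-keeping on top of Theorem \ref{t1} and the preceding Theorem. The one point that deserves care is tracking the multiplicative constants — in particular verifying that the bound $(\alpha+\beta)\frac{2+\sqrt{2}}{2}\approx1.707(\alpha+\beta)$ delivered by Theorem \ref{t1} really is \emph{strictly} less than $2(\alpha+\beta)$, because the preceding Theorem requires a strict inequality in its hypothesis; everything else is immediate.
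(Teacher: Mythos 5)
Your proof is correct and takes essentially the same route as the paper, which disposes of the corollary in one line by invoking the preceding Theorem together with the classical fact $J(X)=\sqrt{2}$ for a Hilbert space. You go slightly further by explicitly verifying the hypothesis $DW(X,\alpha,\beta)<2(\alpha+\beta)$ of that Theorem and by checking the final implication for $d(X,Y)<\sqrt{2}$, both of which the paper leaves implicit.
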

	 \begin{proof}
	 	It is a particular case of Theorem 3 taking into account that $J(X)=\sqrt{2}.$
	 \end{proof}
	 
	Recall that the Lindenstrauss modulus of smoothness is the function $\rho_X:[0,\infty)\to\mathbb{R}$ given by [19]
	$$\rho_X(t)=\sup\left\{\frac{1}{2}\left(\|x+ty\|+\|x-ty\|\right)-1 : x,y\in B_X\right\}.$$
	The coefficient
	$$\rho_X^{\prime}(0)=\lim_{t\to0^+}\frac{\rho_X(t)}{t}$$
	is often called the characteristic of smoothness of $X$. The following theorem relates $DW(X,\alpha,\beta)$ and the characteristic of smoothness of $X$.
	\begin{Theorem}
	In any Banach space $X$, the inequality $DW(X,\alpha,\beta)\geq(\alpha+\beta)\max\{2\rho_{X}^{\prime}(0),1\}$ holds.
		\end{Theorem}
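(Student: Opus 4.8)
The plan is to obtain this lower bound from Lemma~\ref{T1}, in the same spirit as the proof of Theorem~\ref{t2}, by manufacturing the sequences $(f_n)$, $(g_n)$, $(x_n)$ out of the definition of $\rho_X'(0)$. Since Proposition~1 already gives $DW(X,\alpha,\beta)\ge\alpha+\beta$, it suffices to prove $DW(X,\alpha,\beta)\ge 2(\alpha+\beta)\rho_X'(0)$; this is trivial when $\rho_X'(0)=0$, so I would assume $\rho_X'(0)>0$.

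First I would extract an almost-extremal configuration for the modulus of smoothness. Fix $t_n=1/n\downarrow 0$. From $\rho_X(t_n)/t_n\to\rho_X'(0)$ and the fact that $\rho_X(t_n)$ is a supremum, pick $u_n,v_n\in B_X$ with $\|u_n+t_nv_n\|+\|u_n-t_nv_n\|-2>(2\rho_X'(0)-\varepsilon_n)\,t_n$ for a suitable sequence $\varepsilon_n\to 0^+$. Set $z_n:=t_nv_n$, so that $\|z_n\|\le t_n\to 0$; combining the last inequality with $\|u_n\pm z_n\|\le\|u_n\|+t_n$ gives $\|u_n\|>1+t_n(\rho_X'(0)-1-\varepsilon_n/2)$, whence $\|u_n\|\to 1$, and since $\rho_X'(0)>0$ also $z_n\ne 0$ for all large $n$. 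Then choose $f_n,g_n\in S_{X^{*}}$ with $f_n(u_n+z_n)=\|u_n+z_n\|$ and $g_n(u_n-z_n)=\|u_n-z_n\|$.

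Next I would verify the hypotheses of Lemma~\ref{T1} (with $x_n:=u_n$) and evaluate the relevant liminf. From $f_n(u_n)=\|u_n+z_n\|-f_n(z_n)\ge\|u_n\|-2\|z_n\|$ and $f_n(u_n)\le\|u_n\|\le 1$ one gets $f_n(u_n)\to 1$; similarly $g_n(u_n)=\|u_n-z_n\|+g_n(z_n)\ge\|u_n\|-2\|z_n\|\to 1$, so $\liminf_n g_n(u_n)=1>0$. For the decisive estimate, bound $f_n(z_n)=\|u_n+z_n\|-f_n(u_n)\ge\|u_n+z_n\|-\|u_n\|$ and $g_n(z_n)=g_n(u_n)-\|u_n-z_n\|\le\|u_n\|-\|u_n-z_n\|$, so that
\[
(f_n-g_n)(z_n)\ \ge\ \|u_n+z_n\|+\|u_n-z_n\|-2\|u_n\|\ \ge\ \|u_n+z_n\|+\|u_n-z_n\|-2\ >\ (2\rho_X'(0)-\varepsilon_n)\,t_n .
\]
Since $\|z_n\|\le t_n$ and $2\rho_X'(0)-\varepsilon_n\ge 0$ for large $n$, dividing by $\|z_n\|$ yields $\|f_n-g_n\|\ge(f_n-g_n)(z_n)/\|z_n\|> 2\rho_X'(0)-\varepsilon_n$, hence $\liminf_n g_n(u_n)\|f_n-g_n\|\ge 2\rho_X'(0)$. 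Lemma~\ref{T1} then gives $DW(X,\alpha,\beta)\ge(\alpha+\beta)\max\{\liminf_n g_n(u_n)\|f_n-g_n\|,1\}\ge(\alpha+\beta)\max\{2\rho_X'(0),1\}$, as claimed.

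The supporting-functional choices and the norm estimates are routine; the two points I would handle carefully are (i) that the near-extremal pair for $\rho_X(t_n)$ may be taken in $B_X$ with $\|u_n\|\to 1$, so that no appeal to ``$u_n\in S_X$'' is needed, and (ii) the direction of every inequality in the chain for $(f_n-g_n)(z_n)$, together with the guarantee $z_n\ne 0$ that makes the final division legitimate. I expect step (ii) --- in particular, the clean passage from $(f_n-g_n)(z_n)$ back to $\|u_n+z_n\|+\|u_n-z_n\|-2$ --- to be the heart of the argument.
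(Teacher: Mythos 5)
Your proposal is correct, and every step checks out: the extraction of near-extremal pairs $u_n,v_n\in B_X$ for $\rho_X(t_n)$ (using that $\rho_X(t_n)/t_n\to\rho_X'(0)$), the conclusions $\|u_n\|\to1$ and $z_n\neq0$ for large $n$, the verification that $f_n(u_n)\to1$ and $\liminf_n g_n(u_n)=1$, and the chain $(f_n-g_n)(z_n)\ge\|u_n+z_n\|+\|u_n-z_n\|-2\|u_n\|\ge\|u_n+z_n\|+\|u_n-z_n\|-2>(2\rho_X'(0)-\varepsilon_n)t_n$, which after dividing by $\|z_n\|\le t_n$ gives $\liminf_n g_n(u_n)\|f_n-g_n\|\ge2\rho_X'(0)$ and feeds into the weaker form of Lemma \ref{T1}. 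However, your route is genuinely different from the paper's. The paper argues in the dual: it first disposes of the case $DW(X,\alpha,\beta)=2(\alpha+\beta)$, uses $DW(X,\alpha,\beta)<2(\alpha+\beta)$ to infer reflexivity of $X$, then for each $\varepsilon$ with $\delta_{X^*}(\varepsilon)=0$ picks $f_n,g_n\in S_{X^*}$ with $\|f_n-g_n\|=\varepsilon$ and $\|f_n+g_n\|\to2$, uses reflexivity to find norm-attaining $x_n\in S_X$ for $f_n+g_n$, applies Lemma \ref{T1} to get $DW(X,\alpha,\beta)\ge(\alpha+\beta)\varepsilon$, and finally invokes the Lindenstrauss duality formula $\varepsilon_0(X^*)=2\rho_X'(0)$. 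Your primal construction buys self-containedness: the functionals are obtained by Hahn--Banach norming of specific vectors $u_n\pm z_n$ (so no reflexivity or norm-attainment issue arises, and no case split on the value of $DW(X,\alpha,\beta)$ is needed), and the duality formula is bypassed entirely; the price is the more delicate bookkeeping of the error terms $\varepsilon_n$ and the care needed to justify $z_n\neq0$ before dividing. Both arguments ultimately rest on the same Lemma \ref{T1}.
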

		\begin{proof}
The inequality $DW(X,\alpha,\beta)\geq(\alpha+\beta)$ always holds. We have then to prove that $DW(X,\alpha,\beta)\geq2(\alpha+\beta)\rho_{X}^{\prime}(0).$ If ${\mathrm{DW}}(X,\alpha,\beta)=2(\alpha+\beta),$ the
inequality is obvious, so we can assume  ${\mathrm{DW}}(X,\alpha,\beta)<2(\alpha+\beta),$ and then the reflexivity of $X$.

Let ${\varepsilon}\in[0,2]$ such that $\delta_{X^*}(\varepsilon)=0.$ For such $\varepsilon$ there exist two sequences $(f_n)$ and $(g_n)$
in $S_{X^{*}}$ such that $\|f_n-g_n\|=\varepsilon$ for all $n\geq1$ and $\displaystyle\lim_{n\to\infty}\|f_n+g_n\|=2.$ Consider, for each $n\geq1$ , $x_n\in S_{X}$ such that $(f_n+g_n)(x_n)=\|f_n+g_n\|$. It must be
$$\lim_{n\to\infty}f_n(x_n)=\lim_{n\to\infty}g_n(x_n)=1.$$
By  Lemma \ref{T1}, we have
$$DW(X,\alpha,\beta)\geq(\alpha+\beta)\lim_{n\to\infty}g_n(x_n)\|f_n-g_n\|=(\alpha+\beta)\varepsilon.$$
We have proved that, for any $\varepsilon\in[0,2]$ such that $\delta_{X^{*}}(\varepsilon)=0,$ we have $DW(X,\alpha,\beta)\geq(\alpha+\beta)\varepsilon.$ Therefore 
$$\begin{aligned}DW(X,\alpha,\beta)&\geq(\alpha+\beta)\sup\left\{\varepsilon\in[0,2]{:}\delta_{X^*}(\varepsilon)=0\right\}\\&=(\alpha+\beta)\varepsilon_0(X^*)\\&=2(\alpha+\beta)\rho_X^{\prime}(0).\end{aligned}$$
		\end{proof}

\section{The $DW_B(X,\alpha,\beta)$ constant }
In this section, based on the constant $DW(X,\alpha,\beta)$, we will perform some manipulations on $x$ and $y$ to obtain another new constant $DW_{B}(X,\alpha,\beta)$ and study some properties of this constant. We first outline the following key definition: $\alpha, \beta>0$

	$$DW_B(X,\alpha,\beta)=\sup\left\{\frac{\alpha\|x\|+\beta\|y\|}{\|x-y\|}\left\|\frac{x}{\|x\|}-\frac{y}{\|y\|}\right\|:x,y\in X\backslash\{0\},x \perp_B y\right\}.$$ 

\begin{Proposition}
	Let $X$ be a Banach space. Then 
	$$(\alpha+\beta)\leq DW_{B}(X,\alpha,\beta)\leq\max\{2\alpha+\beta,\alpha+2\beta\}.$$
\end{Proposition}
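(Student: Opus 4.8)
The plan is to treat the two bounds separately. For the inequality $DW_B(X,\alpha,\beta)\ge\alpha+\beta$ I would exhibit an admissible pair realizing the value $\alpha+\beta$: fix any $x\in S_X$, pick a norming functional $f\in S_{X^{*}}$ with $f(x)=1$ (Hahn--Banach), and, using $\dim X\ge 2$, choose a unit vector $v\in\ker f$. Then $\|x+\lambda v\|\ge f(x+\lambda v)=1=\|x\|$ for every $\lambda\in\mathbb{R}$, so $x\perp_B v$; moreover $x\ne v$ since $f(x)=1\ne 0=f(v)$. Taking $y=v$, both vectors lie on $S_X$, hence
$$\frac{\alpha\|x\|+\beta\|y\|}{\|x-y\|}\left\|\frac{x}{\|x\|}-\frac{y}{\|y\|}\right\|=\frac{(\alpha+\beta)\|x-v\|}{\|x-v\|}=\alpha+\beta,$$
which gives the lower bound.

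For the upper bound I would first normalize: write $x=au$, $y=bv$ with $u,v\in S_X$, $a,b>0$, $u\perp_B v$, so that the quantity to estimate is $Q=\dfrac{(\alpha a+\beta b)\,\|u-v\|}{\|au-bv\|}$. I would use three ingredients: (i) Birkhoff orthogonality gives $\|au-bv\|=a\,\|u-\tfrac{b}{a}v\|\ge a$; (ii) the reverse triangle inequality gives $\|au-bv\|\ge|a-b|$; and (iii) the identities $au-bv=a(u-v)+(a-b)v=b(u-v)+(a-b)u$ give the linkage $\|u-v\|\le\dfrac{\|au-bv\|+|a-b|}{\max\{a,b\}}$. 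Combining these, $Q\le\dfrac{\alpha a+\beta b}{\max\{a,b\}}\Bigl(1+\dfrac{|a-b|}{\|au-bv\|}\Bigr)$, and I would split into the regimes $b\le a$, $a\le b\le 2a$, $b\ge 2a$. In the latter two, inserting $\|au-bv\|\ge a$ (resp. $\ge b-a$) collapses the estimate to $\alpha+\beta\,b/a\le\alpha+2\beta$; in the first, it reduces to the one-variable bound $Q\le(\alpha+\beta r)(2-r)$ with $r=b/a\in(0,1]$, whose supremum is routinely checked to be at most $2\alpha+\beta$. Taking the overall maximum yields $DW_B(X,\alpha,\beta)\le\max\{2\alpha+\beta,\alpha+2\beta\}$.

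The main obstacle is that the naive bounds $\|au-bv\|\ge\max\{a,|a-b|\}$ and $\|u-v\|\le 2$ are too crude: near $b=2a$ they give only $2\alpha+4\beta$. The resolution is that $\|u-v\|$ and $\|au-bv\|$ cannot both be near their extreme values at once --- a functional norming $u-v$ drives $\|au-bv\|$ upward --- and the decomposition identity in (iii) is exactly what quantifies this interplay. Carrying this through while tracking the asymmetry (only $u\perp_B v$, not $v\perp_B u$, is available, which is why $\alpha$ and $\beta$ play different roles and the bound is a maximum of two quantities) is the delicate part; the remaining calculus, optimizing $(\alpha+\beta r)(2-r)$ over $(0,1]$, is routine.
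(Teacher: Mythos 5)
Your proposal is correct. Both bounds go through: the lower bound by evaluating at a unit-norm Birkhoff-orthogonal pair (your Hahn--Banach construction of such a pair is in fact more careful than the paper, which tacitly assumes one exists), and the upper bound because your three ingredients are valid --- $\|au-bv\|\ge a$ from $u\perp_B v$, $\|au-bv\|\ge|a-b|$, and $\max\{a,b\}\,\|u-v\|\le\|au-bv\|+|a-b|$ from the two decompositions --- and each regime closes: for $a\le b\le 2a$ you get $\alpha+\beta b/a\le\alpha+2\beta$, for $b\ge 2a$ you get $2(\alpha a/b+\beta)\le\alpha+2\beta$ (your summary phrase ``$\alpha+\beta b/a$'' fits only the middle regime, but the insertion you prescribe does give the bound), and for $b\le a$ the quadratic $(\alpha+\beta r)(2-r)$ on $(0,1]$ has maximum $\max\{2\alpha,\alpha+\beta+\tfrac{\alpha^2}{4\beta}\}\le 2\alpha+\beta$. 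The paper uses the same raw ingredients (homogeneity of Birkhoff orthogonality, $\|x-y\|\ge\|x\|$, and the reverse triangle inequality) but organizes them differently: in the case $\|x\|\le\|y\|$ it bounds the two summands separately, showing $\alpha\|x\|\,\bigl\|\tfrac{x}{\|x\|}-\tfrac{y}{\|y\|}\bigr\|\le\alpha\|x-y\|$ via the convex-combination identity plus Birkhoff, and $\beta\|y\|\,\bigl\|\tfrac{x}{\|x\|}-\tfrac{y}{\|y\|}\bigr\|\le\beta(\|y\|-\|x\|)+\beta\|x-y\|\le 2\beta\|x-y\|$, so each of the two cases yields $\alpha+2\beta$ or $2\alpha+\beta$ at once, with no sub-regimes and no optimization. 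Your route folds everything into a single quotient estimate at the cost of a three-way split and a small calculus check; the paper's term-by-term split is shorter, while yours yields the slightly sharper pointwise bound $(\alpha+\beta r)(2-r)$ in the regime $\|y\|\le\|x\|$.
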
		
\begin{proof}
	First, take $x,y\in X\backslash\{0\}$ with $x\perp_{B}y$, and let $u=\frac{x}{\|x\|},v=\frac{y}{\|y\|}.$  From the
	homogeneity of Birkhoff orthogonality, we obtain $u\perp_{B}v.$ Then
	$$DW_B(X,\alpha,\beta)\geq\frac{\alpha\|u\|+\beta\|v\|}{\|u-v\|}\left\|\frac{u}{\|u\|}-\frac{v}{\|v\|}\right\|=(\alpha+\beta).$$
	Second, to obtain an upper bound of 	$DW_B(X,\alpha,\beta)$ for any  $x,y\in X\backslash\{0\}$ with  $x\perp_{B}y$, there are two cases that need to be considered separately.
	
	If $\|x\|\leq\|y\|$, we can obtain that

	$$\begin{aligned}\alpha\|x\|\left\|\frac{x}{\|x\|}-\frac{y}{\|y\|}\right\|&=\alpha\left\|\frac{\|x\|}{\|y\|}(x-y)+\left(1-\frac{\|x\|}{\|y\|}\right)x\right\|\\&\leq\alpha\frac{\|x\|}{\|y\|}\|x-y\|+\alpha\left(1-\frac{\|x\|}{\|y\|}\right)\|x-y\|\\&=\alpha\|x-y\|,\end{aligned}$$
	and
	$$\begin{aligned}\beta\|y\|\left\|{\frac{x}{\|x\|}}-{\frac{y}{\|y\|}}\right\|&\leq\beta\|y\|\left\|\frac{x}{\|x\|}-\frac{x}{\|y\|}\right\|+\beta\|y\|\left\|\frac{x}{\|y\|}-\frac{y}{\|y\|}\right\|\\&=\beta\|y\|-\beta\|x\|+\beta\|x-y\|\\&\leq2\beta\|x-y\|.\end{aligned}$$
	Hence, we obtain
	$$\frac{\alpha\|x\|+\beta\|y\|}{\|x-y\|}\left\|\frac{x}{\|x\|}-\frac{y}{\|y\|}\right\|\leq(\alpha+2\beta).$$
	In fact, if $\|y\|\leq\|x\|$(for $\|x\|\leq\|y\|$ the proof is similar), then
	
	$$\frac{\alpha\|x\|+\beta\|y\|}{\|x-y\|}\left\|\frac{x}{\|x\|}-\frac{y}{\|y\|}\right\|\leq(2\alpha+\beta).$$
	
	Thus, we deduce $$DW_B{(X,\alpha,\beta)}\leq\max{\{2\alpha+\beta,\alpha+2\beta\}}.
	$$
	
\end{proof}		
\begin{Example}
	Let $X=(\mathbb{R}^2,\|\cdot\|_\infty).$ Then $DW_B{(X,\alpha,\beta)}=\max{\{2\alpha+\beta,\alpha+2\beta\}}
.	$
\end{Example}
\begin{proof}
	Let $x=\left(\frac{1}{3},\frac{1}{3}\right),y=\left(0,\frac{2}{3}\right).$ Then, straightforward calculations show that $x\perp_{B}y.$ Thus
	$$DW_B(X,\alpha,\beta)\geq\frac{\alpha\|x\|+\beta\|y\|}{\|x-y\|}\left\|\frac{x}{\|x\|}-\frac{y}{\|y\|}\right\|=\alpha+2\beta.$$
Let $x=\left(0,\frac{2}{3}\right),y=\left(\frac{1}{3},\frac{1}{3}\right),$ we obtain
		$$DW_B(X,\alpha,\beta)\geq\frac{\alpha\|x\|+\beta\|y\|}{\|x-y\|}\left\|\frac{x}{\|x\|}-\frac{y}{\|y\|}\right\|=2\alpha+\beta,$$
as desired.
	\end{proof}

\begin{Proposition}
	Let $X$ be a Banach space. Then the following gives the equivalent definition of the $DW_B(X,\alpha,\beta)$ constant.
	$$(1)DW_B(X,\alpha,\beta)=\sup\left\{\frac{\|u+v\|}{\bigg|\bigg|\frac{1}{\alpha}(1-\beta t)u+tv\bigg|\bigg|}:u,v\in S_X,u\perp_Bv,0< t<\frac{1}{\beta}\right\}.$$
	$$(2)DW_B(X,\alpha,\beta)=\sup\left\{\frac{\|u+v\|}{\displaystyle\min_{0< t<\frac{1}{\beta}}\bigg|\bigg|\frac{1}{\alpha}(1-\beta t)u+tv\bigg|\bigg|}:u,v\in S_X,u\perp_Bv\right\}.$$
\end{Proposition}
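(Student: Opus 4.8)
The proposition is the exact Birkhoff-orthogonal analogue of Proposition 2, so the natural strategy is to mimic that proof almost verbatim, carrying the orthogonality condition $u\perp_B v$ along through the substitutions and checking it is preserved. First I would prove (1). For the inequality $DW_B(X,\alpha,\beta)\le \sup\{\cdots\}$, take arbitrary $x,y\in X\setminus\{0\}$ with $x\perp_B y$ and set $u=\frac{x}{\|x\|}$, $v=-\frac{y}{\|y\|}$. By homogeneity of Birkhoff orthogonality (already used in the previous Proposition's proof), $u\perp_B v$. Then, exactly as in Proposition 2, writing $t=\frac{\|y\|}{\alpha\|x\|+\beta\|y\|}$ one computes
\[
\frac{\alpha\|x\|+\beta\|y\|}{\|x-y\|}\left\|\frac{x}{\|x\|}-\frac{y}{\|y\|}\right\|=\frac{\|u+v\|}{\left\|\frac{1}{\alpha}(1-\beta t)u+tv\right\|},
\]
and one checks $0<t<\frac1\beta$, so the right-hand side is bounded by the supremum over $u,v\in S_X$, $u\perp_B v$, $0<t<\frac1\beta$. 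Taking the supremum over all admissible $x,y$ gives the bound.

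For the reverse inequality, fix $u,v\in S_X$ with $u\perp_B v$ and $0<t<\frac1\beta$, and set $x=\frac{1}{\alpha}(1-\beta t)u$, $y=-tv$; both are nonzero since $0<t<\frac1\beta$ forces $1-\beta t\in(0,1)$. Again by homogeneity of $\perp_B$ (and the fact that $x\perp_B y \iff x\perp_B(-y)$, which follows since the defining inequality $\|x+\lambda y\|\ge\|x\|$ ranges over all real $\lambda$), we get $x\perp_B y$, so $x,y$ is an admissible pair for $DW_B$. A direct computation reverses the identity above, showing $\frac{\|u+v\|}{\|\frac1\alpha(1-\beta t)u+tv\|}\le DW_B(X,\alpha,\beta)$; taking the supremum finishes (1).

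For (2): the inequality $\le$ is immediate from (1) since replacing $\|\frac1\alpha(1-\beta t)u+tv\|$ by its minimum over $t$ only decreases the denominator, hence increases the quotient. For $\ge$, one needs the minimum over $t\in(0,\frac1\beta)$ to actually be attained — I would argue the map $t\mapsto\|\frac1\alpha(1-\beta t)u+tv\|$ is continuous on the closed interval $[0,\frac1\beta]$, with endpoint values $\frac1\alpha$ at $t=0$ and $\frac1\beta$ at $t=\frac1\beta$ (using $\|u\|=\|v\|=1$), and with $u\perp_B v$ implying $\|\frac1\alpha(1-\beta t)u+tv\|\ge\frac1\alpha(1-\beta t)$ on the interior; hence the infimum is attained at some interior $t_0\in(0,\frac1\beta)$ (one should note the infimum is strictly below the $t\to\frac1\beta$ behaviour, or just invoke compactness and rule out the endpoints — the endpoint $t=\frac1\beta$ gives denominator $\frac1\beta$ and numerator $\|u+v\|\le 2$, which is handled since any interior value suffices). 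Then apply (1) at that $t_0$ to conclude $\frac{\|u+v\|}{\min_t\|\cdots\|}\le DW_B(X,\alpha,\beta)$, and take the supremum over $u,v\in S_X$ with $u\perp_B v$.

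**Main obstacle.** The only genuine point requiring care, beyond transcribing Proposition 2's computation, is verifying that Birkhoff orthogonality is preserved under both substitutions — specifically that $u\perp_B v$ is equivalent to $\frac{u}{\|u\|}\perp_B(-\frac{v}{\|v\|})$ type manipulations, i.e. stability under positive scaling of each argument and under sign change of the second argument. Both follow directly from the definition $\|x+\lambda y\|\ge\|x\|$ for all $\lambda\in\mathbb R$, but they must be stated explicitly since, unlike in Proposition 2, the orthogonality constraint is now part of the supremum. The attainment of the minimum in part (2) is the secondary technical point, dispatched by a routine continuity/compactness argument on $[0,\frac1\beta]$.
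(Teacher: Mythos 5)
Your proposal follows essentially the same route as the paper: the same substitutions $u=\frac{x}{\|x\|}$, $v=-\frac{y}{\|y\|}$ and $x=\frac{1}{\alpha}(1-\beta t)u$, $y=-tv$ for part (1), with homogeneity (and sign-invariance in the second argument) of Birkhoff orthogonality carrying the constraint through, and the same reduction of part (2) to the attainment of the minimum in $t$. The only caveat — that the minimizer of $t\mapsto\bigl\|\frac{1}{\alpha}(1-\beta t)u+tv\bigr\|$ might sit at an endpoint rather than in the open interval, so one should either justify interior attainment or fall back on a limiting/infimum argument — is a point the paper itself also glosses over, so your write-up is correct to the same extent as the original.
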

\begin{proof}
(1) First, for any $x,y\in X\backslash\{0\}$ with $ x\perp_{B}y,$
let $u=\frac{x}{\|x\|},v=-\frac{y}{\|y\|}.$ Then we
have 
$$\frac{\alpha\|x\|+\beta\|y\|}{\|x-y\|}\left\|\frac{x}{\|x\|}-\frac{y}{\|y\|}\right\|=\frac{\|u+v\|}{\left\|\frac{\|x\|}{\alpha\|x\|+\beta\|y\|}u+\frac{\|y\|}{\alpha\|x\|+\beta\|y\|}v\right\|}.$$
Since the Birkhoff orthogonality is homogeneous, we obtain $u\perp_{B}v$ . Then, due to, we obtain 
$$DW_B(X,\alpha,\beta)\leq\sup\left\{\frac{\|u+v\|}{\bigg|\bigg|\frac{1}{\alpha}(1-\beta t)u+tv\bigg|\bigg|}:u,v\in S_X,u\perp_Bv,0< t<\frac{1}{\beta}\right\}.$$
Second, let $u,v\in S_X$ with $u\perp_{B}v$. If $0<t<\frac{1}{\beta}$, let $x=\frac{1}{\alpha}(1-\beta t)u\neq0,y=-tv\neq0.$ Then, $x\perp_{B}y$ and 
$$\frac{\|u+v\|}{\bigg|\bigg|\frac{1}{\alpha}(1-\beta t)u+tv\bigg|\bigg|}=\frac{\alpha\|x\|+\beta\|y\|}{\|x-y\|}\left\|\frac{x}{\|x\|}-\frac{y}{\|y\|}\right\|\leq DW_B(X,\alpha,\beta).$$
Consequently, we obtain
$$\sup\left\{\frac{\|u+v\|}{\bigg|\bigg|\frac{1}{\alpha}(1-\beta t)u+tv\bigg|\bigg|}:u,v\in S_X,u\perp_Bv,0< t<\frac{1}{\beta}\right\}\leq DW_B(X,\alpha,\beta).$$
(2) By (1), it is evident that  
$$DW_B(X,\alpha,\beta)\leq\sup\left\{\frac{\|u+v\|}{\displaystyle\min_{0< t<\frac{1}{\beta}}\bigg|\bigg|\frac{1}{\alpha}(1-\beta t)u+tv\bigg|\bigg|}:u,v\in S_X,u\perp_Bv\right\}.$$
For inverse inequality, since, for $u,v\in S_X,$ we must have 
$$\min_{0< t<\frac{1}{\beta}}\bigg|\bigg|\frac{1}{\alpha}(1-\beta t)u+tv\bigg|\bigg|=\bigg|\bigg|\frac{1}{\alpha}(1-\beta t_0)u+t_0v\bigg|\bigg|$$
for some $t_0\in(0,\frac{1}{\beta}),$ then, by using (1) again, we obtain
$$DW_B(X,\alpha,\beta)\geq\sup\left\{\frac{\|u+v\|}{\displaystyle\min_{0< t<\frac{1}{\beta}}\bigg|\bigg|\frac{1}{\alpha}(1-\beta t)u+tv\bigg|\bigg|}:u,v\in S_X,u\perp_Bv\right\}.$$
This completes the proof.

\end{proof}		
	Recall that the rectangular constant $\mu(X)$ introduced by Joly [7] is defined as follows:
		$$\mu(X)=\sup\left\{\frac{\|x\|+\|y\|}{\|x+y\|}:x,y\in X\backslash\{0\},x\perp_By\right\}.$$
The following theorem establishes the relation between $DW_B(X,\alpha,\beta)$ and $ \mu(X).$		
		
\begin{Theorem}
Let $X$ be a Banach space. Then
$$\min\{\alpha,\beta\}\mu(X)\leq DW_{B}(X,\alpha,\beta)\leq2\max\{\alpha,\beta\}\mu(X).$$
\end{Theorem}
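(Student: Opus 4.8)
The plan is to exploit the equivalent formulation of $DW_B(X,\alpha,\beta)$ from Proposition 4(1), which rewrites the constant in terms of a ratio $\frac{\|u+v\|}{\|\frac1\alpha(1-\beta t)u+tv\|}$ over $u,v\in S_X$ with $u\perp_B v$ and $0<t<\tfrac1\beta$, and compare this systematically with the defining ratio $\frac{\|x\|+\|y\|}{\|x+y\|}$ of the rectangular constant $\mu(X)$ (also taken over Birkhoff-orthogonal pairs). The link is that both constants measure, along a Birkhoff-orthogonal pair, how small the norm of a convex-type combination can be relative to $\|u+v\|$; the factors $\min\{\alpha,\beta\}$ and $2\max\{\alpha,\beta\}$ will arise from bounding the coefficient pair $\bigl(\tfrac1\alpha(1-\beta t),\,t\bigr)$ between suitable multiples of a pair summing to $1$.

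For the lower bound $\min\{\alpha,\beta\}\mu(X)\le DW_B(X,\alpha,\beta)$: take any $x,y\in X\setminus\{0\}$ with $x\perp_B y$ realizing $\mu(X)$ approximately, and — after normalizing via homogeneity of $\perp_B$ — reduce to $u=x/\|x\|$, $v=y/\|y\|\in S_X$. I would choose the parameter $t$ in the equivalent formula so that the weights $\tfrac1\alpha(1-\beta t)$ and $t$ are proportional to $\|x\|$ and $\|y\|$ (this is exactly the substitution used in the proof of Proposition 4), producing a denominator of the form $\frac{\text{const}}{\alpha\|x\|+\beta\|y\|}\|x+y\|$ (with a sign flip on one of $u,v$ that is harmless after replacing $y\mapsto -y$, which preserves Birkhoff orthogonality). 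Then bound $\alpha\|x\|+\beta\|y\|\le \max\{\alpha,\beta\}(\|x\|+\|y\|)$ in the numerator-direction and $\ge \min\{\alpha,\beta\}(\|x\|+\|y\|)$ where needed, and collect constants; the surviving factor is $\min\{\alpha,\beta\}$ times $\frac{\|x\|+\|y\|}{\|x+y\|}$, giving the claim after taking suprema.

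For the upper bound $DW_B(X,\alpha,\beta)\le 2\max\{\alpha,\beta\}\mu(X)$: start from an arbitrary Birkhoff-orthogonal pair $x\perp_B y$ in the original definition and estimate
$$\frac{\alpha\|x\|+\beta\|y\|}{\|x-y\|}\left\|\frac{x}{\|x\|}-\frac{y}{\|y\|}\right\|.$$
Write $\alpha\|x\|+\beta\|y\|\le\max\{\alpha,\beta\}(\|x\|+\|y\|)$, so it suffices to bound $\frac{\|x\|+\|y\|}{\|x-y\|}\bigl\|\frac{x}{\|x\|}-\frac{y}{\|y\|}\bigr\|$ by $2\mu(X)$. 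Here I would recall that $x\perp_B y$ is preserved under the substitutions involved, and use the known identity (implicit in the equivalent-definition proofs) that $\frac{\|x\|+\|y\|}{\|x-y\|}\bigl\|\frac{x}{\|x\|}-\frac{y}{\|y\|}\bigr\|$ equals a ratio of the rectangular-constant type for the vectors $u=x/\|x\|$, $v=-y/\|y\|$; combined with the elementary fact $\|u-v\|\le 2$ for unit vectors (or rather controlling $\|x-y\|$ from below via $\|x-y\|\ge\max\{\|x\|,\|y\|\}\ge\tfrac12(\|x\|+\|y\|)$, which holds because $x\perp_B y$ implies $\|x-\lambda y\|\ge\|x\|$ and symmetrically), the factor $2$ emerges. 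Taking the supremum over all such pairs yields $DW_B(X,\alpha,\beta)\le 2\max\{\alpha,\beta\}\mu(X)$.

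The main obstacle I anticipate is the upper bound: one must correctly relate $\|x-y\|$ (appearing in $DW_B$) to $\|x+y\|$ (appearing in $\mu$) for a Birkhoff-orthogonal pair, and track the sign conventions when passing between $u,v$ and $x,y$ — a careless substitution can lose or gain a factor of $2$. The key inequality to nail down is a lower bound of the form $\|x-y\|\ge c\,(\|x\|+\|y\|)$ valid whenever $x\perp_B y$ (with $c=\tfrac12$), since $\|x-y\|\ge\|x\|$ from Birkhoff orthogonality and, after interchanging roles or using $y\perp_B$-type estimates, $\|x-y\|\ge\tfrac12(\|x\|+\|y\|)$; getting this constant sharp is what makes the final factor exactly $2\max\{\alpha,\beta\}$ rather than something larger.
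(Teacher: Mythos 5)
Your proposal has a genuine gap in the lower bound and a misidentified mechanism in the upper bound. For the lower bound, the step that actually makes $\min\{\alpha,\beta\}\mu(X)\leq DW_B(X,\alpha,\beta)$ work is the inequality $\left\|\frac{x}{\|x\|}-\frac{y}{\|y\|}\right\|\geq 1$, which holds because $\frac{x}{\|x\|}\perp_B\frac{y}{\|y\|}$ (take $\lambda=-1$ in the definition of Birkhoff orthogonality); this lets you drop the angular-distance factor and pass from the $DW_B$ ratio to $\min\{\alpha,\beta\}\frac{\|x\|+\|y\|}{\|x-y\|}$. Your plan never states this. The detour through Proposition 4(1) cannot supply it: the reformulated ratio $\frac{\|u+v\|}{\|\frac{\|x\|}{\alpha\|x\|+\beta\|y\|}u+\frac{\|y\|}{\alpha\|x\|+\beta\|y\|}v\|}$ is \emph{identically equal} to the original $DW_B$ expression (the denominator is just $\frac{\|x-y\|}{\alpha\|x\|+\beta\|y\|}$), so "collecting constants" there is circular and still leaves the factor $\|u+v\|$ to be bounded below by $1$ --- which is exactly the Birkhoff-orthogonality input you omit.

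For the upper bound, one of your two proposed routes is the correct (and essentially trivial) one: $\left\|\frac{x}{\|x\|}-\frac{y}{\|y\|}\right\|\leq 2$ together with $\alpha\|x\|+\beta\|y\|\leq\max\{\alpha,\beta\}(\|x\|+\|y\|)$ and the observation that, by homogeneity of $\perp_B$ (replace $y$ by $-y$), $\mu(X)$ may equivalently be written with $\|x-y\|$ in the denominator. But the alternative you flag as "the key inequality to nail down" is false: $x\perp_B y$ gives $\|x-y\|\geq\|x\|$ but \emph{not} $\|x-y\|\geq\|y\|$, since Birkhoff orthogonality is not symmetric, so $\|x-y\|\geq\max\{\|x\|,\|y\|\}$ fails; and $\|x-y\|\geq\frac12(\|x\|+\|y\|)$ for all Birkhoff-orthogonal pairs is equivalent to $\mu(X)\leq 2$, which is false in general (the rectangular constant can be as large as $3$). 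No lower bound on $\|x-y\|$ in terms of $\|x\|+\|y\|$ is needed at all --- that ratio is precisely what $\mu(X)$ absorbs when you take the supremum. The paper's proof is the two-line estimate: bound the angular distance between $1$ and $2$ using Birkhoff orthogonality, bound $\alpha\|x\|+\beta\|y\|$ between $\min\{\alpha,\beta\}$ and $\max\{\alpha,\beta\}$ times $\|x\|+\|y\|$, and recognize the remaining supremum as $\mu(X)$.
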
		
\begin{proof}
Since the Birkhoff orthogonality is homogeneous, one can easily deduce that 
	$$\mu(X)=\sup\left\{\frac{\|x\|+\|y\|}{\|x-y\|}:x,y\in X\backslash\{0\},x\perp_By\right\}.$$
In the first place, we will show that 
	$$DW_{B}(X,\alpha,\beta)\geq\min\{\alpha,\beta\}\mu(X).$$
	Since, for any $ x, y$ belong to $ X\backslash\{0\}$ with $x\perp_{B}y$, then we have $\frac{x}{||x||}\perp_{B} \frac{y}{||y||}$. Hence, we can obtain that the following inequality .
	$$\begin{aligned}\frac{\alpha||x||+\beta||y||}{||x-y||}\bigg|\bigg|\frac{x}{||x||}-\frac{y}{||y||}\bigg|\bigg|&\geq\frac{\alpha||x||+\beta||y||}{||x-y||}\\&\geq\min\{\alpha,\beta\}\frac{||x||+||y||}{||x-y||},\end{aligned}$$

which means that	$DW_{B}(X,\alpha,\beta)\geq\min\{\alpha,\beta\}\mu(X).$
	
To prove the right inequality:
	$$\begin{aligned}\frac{\alpha||x||+\beta||y||}{||x-y||}\bigg|\bigg|\frac{x}{||x||}-\frac{y}{||y||}\bigg|\bigg|&\leq2\frac{(\alpha||x||+\beta||y||)}{||x-y||}\\&\leq2\max\{\alpha,\beta\}\frac{||x||+||y||}{||x-y||}.\end{aligned}$$

Thus, we can obtain 
	$$DW_{B}(X,\alpha,\beta)\leq2\max\{\alpha,\beta\}\mu(X).$$
	
	This completes the proof.
\end{proof}

\end{document}